\begin{document}

\newtheorem{thm}{Theorem}[section]
\newtheorem{lem}[thm]{Lemma}
\newtheorem{cor}[thm]{Corollary}
\newtheorem{conj}[thm]{Conjecture}
\newtheorem{qn}[thm]{Question}
\newtheorem{pro}[thm]{Proposition}
\renewcommand{\theequation}{\arabic{section}.\arabic{equation}}
\allowdisplaybreaks

\def\square{\hfill${\vcenter{\vbox{\hrule height.4pt \hbox{\vrule width.4pt
height7pt \kern7pt \vrule width.4pt} \hrule height.4pt}}}$}

\title{Sharp Bounds for Generalized Elliptic Integrals of the First Kind}
\author{Miao-Kun Wang}

\address{Miao-Kun Wang, Department of Mathematics, Huzhou University, Huzhou 313000, China}

\email{wmk000@126.com}

\author{Yu-Ming Chu}

\address{Yu-Ming Chu (Corresponding author), Department of Mathematics, Huzhou University, Huzhou 313000, China}

\email{chuyuming@hutc.zj.cn}

\author{Song-Liang Qiu}

\address{Department of Mathematics, Zhejiang Sci-Tech University, Hangzhou, Zhejiang, 310018, China}

\email{sl\_qiu@zstu.edu.cn}

\thanks{This work was supported by Natural Science Foundation of China (Grant Nos. 61374086, 11171307) and Natural Science Foundation
of the Zhejiang Province (Grant No. LY13A010004).}

\subjclass[2010]{33E05, 33C05}

\keywords{Gaussian hypergeometric function, generalized elliptic
integral, Ramanujan constant function}

\begin{abstract} In this paper, we prove that the double inequality
\begin{equation*}
1+\alpha r'^2<\frac{\mathcal{K}_{a}(r)}{\sin(\pi
a)\log(e^{R(a)/2}/r')}<1+\beta r'^2
\end{equation*}
holds for all $a\in (0, 1/2]$ and $r\in (0, 1)$ if and only if
$\alpha\leq \pi/[R(a)\sin(\pi a)]-1$ and $\beta\geq a(1-a)$, where
$r'=\sqrt{1-r^2}$, $\mathcal{K}_{a}(r)$ is the generalized elliptic
integral of the first kind and $R(x)$ is the Ramanujan constant
function. Besides, as the key tool, the series expression for the
Ramanujan constant function $R(x)$ is given.
\end{abstract}

\maketitle
\section{Introduction}
For $r\in(0,1)$, Legendre's complete elliptic integrals [1] of the first
kind and the second kind are given by
\begin{equation*}
\mathcal{K}=\mathcal{K}(r)=\int_{0}^{\pi/2}\frac{dt}{\sqrt{1-r^2\sin^{2}(t)}},
\end{equation*}
\begin{equation*}
\mathcal{E}=\mathcal{E}(r)=\int_{0}^{\pi/2}\sqrt{1-r^2\sin^{2}(t)}dt,
\end{equation*}
respectively.  They are the particular cases of Gaussian
hypergeometric function
\begin{equation*}
F(a,b;c;x)=\sum_{n=0}^{\infty}\frac{(a)_{n}(b)_{n}}{(c)_{n}}\frac{x^n}{n!}\quad
(-1<x<1),
\end{equation*}
where $(a)_{n}=\Gamma(a+n)/\Gamma(a)$ and
$\Gamma(x)=\int_{0}^{\infty}t^{x-1}e^{-t}dt$ $(x>0)$ is the gamma
function. Indeed, we have
\begin{equation*}
\mathcal{K}(r)=\frac{\pi}{2}F\left(\frac{1}{2},\frac{1}{2};1;r^2\right),~~\mathcal{E}(r)=\frac{\pi}{2}F\left(-\frac{1}{2},\frac{1}{2};1;r^2\right).
\end{equation*}

It is well known that the complete elliptic integrals and Gaussian
hypergeometric function have important applications in
quasiconformal mappings, number theory, and other fields of the
mathematical and mathematical physics. For instance, the Gaussian
arithmetic-geometric mean \emph{AGM} and the modulus of the plane
Gr\"{o}tzsch ring can be expressed in terms of the complete elliptic
integrals of the first kind, and the complete elliptic integrals of
the second kind gives the formula of the perimeter of an ellipse.
Moreover, Ramanujan modular equation and continued fraction in
number theory are both related to the Gaussian hypergeometric
function $F(a,b;c;x)$. For these, and the properties for the
complete elliptic integrals and Gaussian hypergeometric function see
[2, 3, 5, 6, 9, 11, 16, 19-21].

For $r\in(0,1)$ and $a\in(0,1)$, the generalized elliptic integrals (see [4, 10])
are defined by
\begin{equation}
{\mathcal{K}}_{a}={\mathcal{K}}_{a}(r)=\frac{\pi}{2}F(a,1-a;1;r^2)
\end{equation}
and
\begin{equation}
{\mathcal{E}}_{a}={\mathcal{E}}_{a}(r)=\frac{\pi}{2}F(a-1,1-a;1;r^2).
\end{equation}
Clearly ${\mathcal{K}}_{a}(0)={\mathcal{E}}_{a}(0)=\pi/2$,
${\mathcal{K}}_{a}(1^-)=\infty$ and
${\mathcal{E}}_{a}(1)=[\sin({\pi}{a})]/[2(1-a)]$. In the particular
case $a=1/2$, the generalized elliptic integrals reduce to the
complete elliptic integrals. By symmetry of (1.1), we assume that
$a\in(0,1/2]$ in the sequence.

The generalized elliptic integrals satisfy the following derivative
formulas:
\begin{equation}
\frac{d\mathcal{K}_{a}}{dr}=\frac{2(1-a)}{r{r'}^2}(\mathcal{E}_{a}-{r'}^2\mathcal{K}_{a}),
\quad
\frac{d\mathcal{E}_{a}}{dr}=-\frac{2(1-a)}{r}(\mathcal{K}_{a}-\mathcal{E}_{a}),
\end{equation}
\begin{equation}
\frac{d}{dr}(\mathcal{K}_{a}-\mathcal{E}_{a})=\frac{2(1-a)r\mathcal{E}_{a}}{{r'}^2},
\quad\frac{d}{dr}(\mathcal{E}_{a}-{r'}^2\mathcal{K}_{a})=2ar\mathcal{K}_{a}.
\end{equation}
Here and in what follows we set  $r^{\prime}={\sqrt {1-r^2}}$ for
$r\in(0,1)$.

In 2000, Anderson, Qiu, Vamanamurthy and Vuorinen [4] reintroduced
the generalized elliptic integrals in geometry function theory,
found that the generalized elliptic integrals of the first kind
$\mathcal{K}_{a}$ arises from the Schwarz-Christoffel transformation
of the upper half-plane onto a parallelogram, and established
several monotonicity theorems for the generalized elliptic integrals
$\mathcal{K}_{a}$ and $\mathcal{E}_{a}$.

Recently, the generalized elliptic integrals have attracted the attention of many mathematicians. In particular,
many remarkable properties and inequalities for the generalized elliptic integrals can be found in the literature [8, 12-14, 22].

Very recently, Takeuchi [18] discussed the generalized trigonometric function and found a new form of the generalized
complete elliptic integrals.

In [16], Qiu and Vamanamurthy proved that the inequality
\begin{equation}
\frac{\mathcal{K}(r)}{\log(4/r')}<1+\frac{1}{4}r'^2
\end{equation}
holds for all $r\in(0,1)$.

Alzer [3] proved that the inequality
\begin{equation}
1+\left(\frac{\pi}{4\log2}-1\right)r'^2<\frac{\mathcal{K}(r)}{\log(4/r')}
\end{equation}
holds for all $r\in(0,1)$. Moreover, Alzer also proved that the constant
factors $1/4$ in (1.5) and $\pi/(4\log2)-1$ in (1.6) are best
possible.

The main purpose of this paper is to generalize inequalities (1.5)
and (1.6) to $\mathcal{K}_{a}$. Our main result is the following
Theorem 1.1.
\begin{thm}
Let $R(x)$ be the Ramanujan constant function defined by (2.1). Then the double inequality
\begin{equation}
1+\alpha r'^2<\frac{\mathcal{K}_{a}(r)}{\sin(\pi
a)\log(e^{R(a)/2}/r')}<1+\beta r'^2
\end{equation}
holds for all $a\in (0,1/2]$ and $r\in(0,1)$ if and only if $\alpha\leq \alpha_{0}=\pi/[R(a)\sin(\pi a)]-1$
and $\beta\geq \beta_{0}=a(1-a)$.
\end{thm}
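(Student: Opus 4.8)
The plan is to reduce the double inequality to the behaviour of the single function
\[
g_a(r):=\frac{1}{r'^2}\Bigl(\frac{\mathcal{K}_{a}(r)}{\sin(\pi a)\log(e^{R(a)/2}/r')}-1\Bigr),\qquad r\in(0,1).
\]
Dividing (1.7) through by $r'^2>0$, one sees that (1.7) holds for all $r\in(0,1)$ if and only if $\alpha\le\inf_{(0,1)}g_a$ and $\beta\ge\sup_{(0,1)}g_a$. Hence it suffices to prove that $g_a$ is strictly increasing on $(0,1)$ with $g_a(0^+)=\alpha_0$ and $g_a(1^-)=\beta_0$; the asserted ``if and only if'' (together with the relation $\alpha_0<\beta_0$) then follows at once, the extremal constants not being attained.

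First I would settle the two boundary values. Put $L(r)=\log(e^{R(a)/2}/r')=\tfrac12\bigl(R(a)-\log r'^2\bigr)$. Direct substitution at $r=0$ gives $g_a(0^+)=\bigl(\tfrac{\pi}{2}-\tfrac12\sin(\pi a)R(a)\bigr)\big/\bigl(\tfrac12\sin(\pi a)R(a)\bigr)=\pi/[R(a)\sin(\pi a)]-1=\alpha_0$. For the value at $r=1$ I would invoke the series expression for $R(a)$ from Section 2 together with the classical expansion of $F(a,1-a;1;\cdot)$ about its logarithmic singularity, which takes the form
\[
\mathcal{K}_{a}(r)=\frac{\sin(\pi a)}{2}\sum_{n=0}^{\infty}\frac{(a)_{n}(1-a)_{n}}{(n!)^2}\bigl(R_{n}(a)-\log r'^2\bigr)r'^{2n},
\]
where $R_{n}(a)=\sum_{k=n}^{\infty}\bigl(\tfrac1{k+a}+\tfrac1{k+1-a}-\tfrac2{k+1}\bigr)$ is the $n$th tail of the series for $R(a)=R_{0}(a)$. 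Since $2a(1-a)\le\tfrac12$, every summand here is positive, so $0<R_{n}(a)<R_{n-1}(a)$ and $R_{n}(a)\to0$. Dividing the last display by $\sin(\pi a)L(r)$ and subtracting $1$ yields the working formula
\[
g_a(r)=\frac{\sum_{n\ge1}\frac{(a)_{n}(1-a)_{n}}{(n!)^2}\bigl(R_{n}(a)-\log r'^2\bigr)r'^{2(n-1)}}{R(a)-\log r'^2},
\]
from which $g_a(1^-)=a(1-a)=\beta_0$ is immediate, and from which, letting $r\to0$ and using the evaluation $\sum_{n\ge0}\frac{(a)_{n}(1-a)_{n}}{(n!)^2}R_{n}(a)=\pi/\sin(\pi a)$ (obtained by putting $r=0$ in the expansion of $\mathcal{K}_{a}$), one recovers $g_a(0^+)=\alpha_0$ and in particular $\alpha_0>0$.

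It remains to prove that $g_a$ is strictly increasing. Write $\ell=-\log r'^2$, an increasing map of $(0,1)$ onto $(0,\infty)$, and put $c_{n}=c_{n}(a)=\frac{(a)_{n}(1-a)_{n}}{(n!)^2}$; the working formula becomes $g_a=P(\ell)/D(\ell)$ with $D(\ell)=R(a)+\ell$ and $P(\ell)=\sum_{n\ge1}c_{n}\bigl(R_{n}(a)+\ell\bigr)e^{-(n-1)\ell}$. A short computation shows that $\frac{d}{d\ell}g_a>0$ is equivalent to $P'(\ell)\bigl(R(a)+\ell\bigr)>P(\ell)$, and, since $c_{1}\bigl(R_{0}(a)-R_{1}(a)\bigr)=a(1-a)\bigl(\tfrac1{a(1-a)}-2\bigr)=1-2a(1-a)$, this expands to
\[
\bigl[1-2a(1-a)\bigr]+\sum_{n\ge2}c_{n}e^{-(n-1)\ell}\Bigl[\bigl(R_{0}(a)-R_{n}(a)\bigr)-(n-1)\bigl(R_{0}(a)+\ell\bigr)\bigl(R_{n}(a)+\ell\bigr)\Bigr]>0 .
\]
This inequality is the main obstacle. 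The constant $1-2a(1-a)\ge\tfrac12$ is positive, but for moderate $\ell$ the bracketed quantities become negative, so one must show that the resulting series of negative contributions can never overcome this constant, uniformly in $a\in(0,1/2]$ and $\ell>0$. To carry this out I would use the elementary estimates that also fall out of Section 2, namely that $n c_{n}(a)$ increases to $\sin(\pi a)/\pi$ (hence $n c_{n}(a)<\sin(\pi a)/\pi$) and that $R_{0}(a)-R_{n}(a)<1/[a(1-a)]-1$, together with $0<R_{n}(a)<R_{n-1}(a)$ and $R_{n}(a)\sim1/n$; splitting the range of $\ell$ into a neighbourhood of $0$ — where the near-cancellation $\bigl(R_{0}(a)-R_{n}(a)\bigr)-(n-1)R_{0}(a)R_{n}(a)=O(1/n)$ must be kept intact to obtain a convergent and small bound — and its complement — where the factors $e^{-(n-1)\ell}$ already make the tail summable and small — one estimates the series in each regime and closes the inequality. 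Granting this, $g_a$ is strictly increasing on $(0,1)$; combined with $g_a(0^+)=\alpha_0$ and $g_a(1^-)=\beta_0$ it follows that $\alpha_0<g_a(r)<\beta_0$ throughout $(0,1)$ with neither bound attained, which by the reduction in the first paragraph is precisely the statement of Theorem 1.1.
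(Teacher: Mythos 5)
Your reduction and both boundary computations are correct: the logarithmic expansion $\mathcal{K}_{a}(r)=\tfrac{\sin(\pi a)}{2}\sum_{n\ge0}c_{n}\bigl(R_{n}(a)-\log r'^2\bigr)r'^{2n}$ is the right tool, the identity $\sum_{n\ge0}c_{n}R_{n}(a)=\pi/\sin(\pi a)$ does give $g_a(0^+)=\alpha_0$, the limit $g_a(1^-)=a(1-a)$ is immediate, and your algebraic restatement of $g_a'>0$ (including the evaluation $c_1(R_0-R_1)=1-2a(1-a)$) checks out. This is a genuinely different route from the paper, which never touches the logarithmic expansion: it works with $H_{\lambda}(r)=\sin(\pi a)(1+\lambda r'^2)\log(e^{R(a)/2}/r')-\mathcal{K}_a(r)$, reduces $H_{\lambda}'$ to an auxiliary ratio $F(r)$ handled by the monotone l'H\^opital rule and hypergeometric series manipulations (Lemma 3.3), and separately develops the inequalities for $R(x)$ in Section 2 (Theorem 2.3, Corollaries 2.4 and 2.5) needed to locate $\alpha_0$ relative to the thresholds $\lambda_1,\lambda_2,\beta_0$. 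Note also that you are aiming at a strictly stronger statement than the paper proves: monotonicity of $g_a$ on all of $(0,1)$, rather than merely $\alpha_0<g_a<\beta_0$.

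The problem is that the one step carrying all the difficulty is not done. You yourself flag the displayed inequality
\[
\bigl[1-2a(1-a)\bigr]+\sum_{n\ge2}c_{n}e^{-(n-1)\ell}\Bigl[\bigl(R_{0}(a)-R_{n}(a)\bigr)-(n-1)\bigl(R_{0}(a)+\ell\bigr)\bigl(R_{n}(a)+\ell\bigr)\Bigr]>0
\]
as ``the main obstacle'' and then write ``Granting this\dots''. The negative part of the series is genuinely of the same order as the positive constant: heuristically $c_n\sim\sin(\pi a)/(\pi n)$ and $R_n\sim 1/n$ give a negative contribution of size roughly $\tfrac{\sin(\pi a)}{\pi}\,\ell^2/(e^{\ell}-1)$, whose maximum over $\ell$ is a sizeable fraction of $1-2a(1-a)\ge\tfrac12$, and near $\ell=0$ one must exploit the cancellation $(R_0-R_n)-(n-1)R_0R_n=O(1/n)$ with some care since $R_0(a)\sim 1/a$ blows up as $a\to0$. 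So the inequality is plausibly true but by no means automatic, and ``splitting the range of $\ell$ \dots one estimates the series in each regime and closes the inequality'' is a plan, not a proof; it is exactly the kind of delicate, uniform-in-$a$ estimate that occupies Sections 2 and 3 of the paper in its own formulation. Until that display is actually established for all $a\in(0,1/2]$ and $\ell>0$, the argument is incomplete.
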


\bigskip
\section{Some properties for Ramanujan constant function $R(x)$}
\setcounter{equation}{0}

For $x\in(0,1/2]$, the Ramanujan constant function $R(x)$ ([17]) is given
by
\begin{equation}
R(x)=-2\gamma-\Psi(x)-\Psi(1-x),\quad
R\left(1/2\right)=4\log2,
\end{equation}
where
$\gamma=\lim\limits_{n\rightarrow\infty}(\sum_{k=1}^{n}1/k-\log{n})=0.577215\cdots$
is the Euler-Mascheroni constant, and
$$\Psi(x)=\frac{\Gamma'(x)}{\Gamma(x)},\quad \Gamma(x)=\int\limits_{0}^{\infty}t^{x-1}{e}^{-t}dt, \quad x>0.$$

It is well known that $\Psi^{(n)}(x)~~(n\geq 0)$ has the series expansion as follows:
\begin{equation}
\Psi^{(n)}(x)=\left\{
\begin{array}{ll}
\displaystyle-\gamma-\frac{1}{x}+\sum\limits_{k=1}^{\infty}\frac{x}{k(k+x)}, &n=0\\
\displaystyle(-1)^{n+1}n!\sum\limits_{k=0}^{\infty}\frac{1}{(x+k)^{n+1}},
&n\geq 1.
\end{array}
\right.
\end{equation}

The purpose of this section is to present the series expansion formula for
$R(x)(x\in(0,1/2])$ (Theorem 2.2) and establish two important inequalities involving $R(x)$
(Corollaries 2.4 and 2.5), which will be used in the
proof of our main result.

\begin{lem} The function
$\xi(x)=1/[x(1-x)]-R(x)$ is strictly increasing from $(0,1/2]$ onto
$(1,4-4\log2]$.
\end{lem}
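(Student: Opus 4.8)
The plan is to establish monotonicity by computing $\xi'(x)$ explicitly and showing it is positive, and then to identify the range by evaluating the two endpoint limits. First I would use the definition $R(x)=-2\gamma-\Psi(x)-\Psi(1-x)$ to get $R'(x)=\Psi'(1-x)-\Psi'(x)$, so that, together with $\frac{d}{dx}\,\frac{1}{x(1-x)}=\frac{2x-1}{x^2(1-x)^2}$,
\begin{equation*}
\xi'(x)=\frac{2x-1}{x^2(1-x)^2}+\Psi'(x)-\Psi'(1-x).
\end{equation*}
Now I would invoke the series $\Psi'(y)=\sum_{k=0}^{\infty}(y+k)^{-2}$ from (2.2) and split off the $k=0$ terms: they contribute $\frac{1}{x^2}-\frac{1}{(1-x)^2}=\frac{1-2x}{x^2(1-x)^2}$, which cancels the rational term exactly. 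This leaves
\begin{equation*}
\xi'(x)=\sum_{k=1}^{\infty}\left[\frac{1}{(x+k)^2}-\frac{1}{(1-x+k)^2}\right].
\end{equation*}
For $x\in(0,1/2)$ we have $0<x+k<1-x+k$ for every $k\geq 1$, so each summand is strictly positive; hence $\xi'(x)>0$ on $(0,1/2)$, and since $\xi$ is continuous at $1/2$ (where $\xi'(1/2)=0$), it is strictly increasing on the whole of $(0,1/2]$.

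It then remains to determine the endpoint values. At the right endpoint, $\xi(1/2)=\frac{1}{(1/2)(1/2)}-R(1/2)=4-4\log 2$, and this value is attained. For the left endpoint I would write $\xi(x)=\frac{1}{1-x}+\bigl(\frac1x-R(x)\bigr)$ and use (2.2) in the form $\Psi(x)+\frac1x=-\gamma+\sum_{k=1}^{\infty}\frac{x}{k(k+x)}$, which tends to $-\gamma$ as $x\to 0^+$, together with $\Psi(1-x)\to\Psi(1)=-\gamma$; consequently $R(x)-\frac1x=-2\gamma-\bigl(\Psi(x)+\frac1x\bigr)-\Psi(1-x)\to 0$, so $\xi(x)\to 1$ as $x\to 0^+$. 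Since $0$ is not in the domain, this value is not attained, and the continuous, strictly increasing function $\xi$ therefore maps $(0,1/2]$ onto $(1,4-4\log 2]$.

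The computation is largely routine; the one point that carries the proof is the cancellation between $\frac{d}{dx}\,\frac{1}{x(1-x)}$ and the $k=0$ term of the digamma series, which is exactly what collapses $\xi'$ to a manifestly positive tail series. The other place requiring a little care is the limit at $0$, but this is immediate from the standard expansion (2.2). (Term-by-term differentiation of the $\Psi'$-series is justified by uniform convergence on compact subsets of $(0,\infty)$.)
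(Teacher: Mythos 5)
Your proposal is correct and follows essentially the same route as the paper: differentiate, use the series $\Psi'(y)=\sum_{k\ge0}(y+k)^{-2}$ so that the $k=0$ terms cancel the rational part and leave the manifestly positive tail $\sum_{k\ge1}\bigl[(x+k)^{-2}-(1-x+k)^{-2}\bigr]$, then compute the two endpoint limits. If anything you are slightly more careful than the paper at $x=1/2$, where the tail vanishes (so strict positivity of $\xi'$ holds only on the open interval, which still yields strict monotonicity on $(0,1/2]$).
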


\begin{proof} Differentiating $\xi$ yields
\begin{align}
\xi'(x)=&-\frac{1}{x^2}+\frac{1}{(1-x)^2}+\Psi'(x)-\Psi'(1-x)\notag\\
=&-\frac{1}{x^2}+\frac{1}{(1-x)^2}+\sum_{k=0}^{\infty}\frac{1}{(k+x)^2}-\sum_{k=0}^{\infty}\frac{1}{(k+1-x)^2}\notag\\
=&\sum_{k=1}^{\infty}\frac{1}{(k+x)^2}-\sum_{k=1}^{\infty}\frac{1}{(k+1-x)^2}>0\notag
\end{align}
for $x\in(0,1/2]$. Moreover, $\xi(1/2)=4-R(1/2)=4-4\log{2}$ and
\begin{align*}
\lim_{x\rightarrow 0^-}\xi(x)=&\lim_{x\rightarrow
0^-}\left(\frac{1}{x}+\frac{1}{1-x}+\Psi(x)+\Psi(1-x)+2\gamma\right)\\
=&\lim_{x\rightarrow
0^-}\bigg(\frac{1}{x}+\frac{1}{1-x}-\gamma-\frac{1}{x}+\sum_{k=1}^{\infty}\frac{x}{k(k+x)}-\gamma-\frac{1}{1-x}\\
&+\sum_{k=1}^{\infty}\frac{1-x}{k(k+1-x)}+2\gamma\bigg)\\
=&\lim_{x\rightarrow
0^-}\left(\sum_{k=1}^{\infty}\frac{x}{k(k+x)}+\sum_{k=1}^{\infty}\frac{1-x}{k(k+1-x)}\right)=\sum_{k=1}^{\infty}\frac{1}{k(k+1)}=1.
\end{align*}
\end{proof}

\begin{thm} The Ramanujan constant function $R(x)$ has the
following series expansion:
\begin{align}
R(x)=&\frac{1}{x}+2\zeta(3)x^2+2\zeta(5)x^4+\cdots+2\zeta(2k+1)x^{2k}+\cdots\notag\\
=&\frac{1}{x}+\sum\limits_{k=1}^{\infty}2\zeta(2k+1)x^{2k},
\end{align}
where $\zeta(x)=\sum_{n=1}^{\infty}n^{-x}$ is the well-known Riemann zeta function.
\end{thm}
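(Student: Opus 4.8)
The plan is to read off (2.3) directly from the partial-fraction expansion (2.2) for $\Psi=\Psi^{(0)}$. Since $x/[k(k+x)]=1/k-1/(k+x)$, the case $n=0$ of (2.2) gives $\Psi(x)=-\gamma-1/x+\sum_{k=1}^{\infty}\bigl(1/k-1/(k+x)\bigr)$ and, applying the same formula at $1+x$ and reindexing the tail, $\Psi(1+x)=-\gamma+\sum_{k=1}^{\infty}\bigl(1/k-1/(k+x)\bigr)$ (so in particular $\Psi(1+x)=\Psi(x)+1/x$); the single boundary term $1/(1+x)$ cancels in that reindexing, which is the one piece of series bookkeeping involved. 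Writing $R(x)=-2\gamma-\Psi(x)-\Psi(1-x)$ and substituting $\Psi(x)=\Psi(1+x)-1/x$, we obtain
\begin{equation*}
R(x)=\frac{1}{x}-2\gamma-\bigl[\Psi(1+x)+\Psi(1-x)\bigr],
\end{equation*}
so everything reduces to the Taylor expansion of $\Psi(1+x)+\Psi(1-x)$ at $x=0$.

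For $k\geq 1$ and $|x|<1$ (so that $|x|/k<1$), the geometric series gives $1/k-1/(k+x)=\dfrac{1}{k}\cdot\dfrac{x/k}{1+x/k}=\sum_{m=1}^{\infty}(-1)^{m-1}\dfrac{x^{m}}{k^{m+1}}$. Summing over $k$ and interchanging the order of summation — permissible because $\sum_{k,m\geq 1}|x|^{m}/k^{m+1}\leq\sum_{k\geq 1}|x|/[k(k-|x|)]<\infty$ — yields the Taylor series $\Psi(1+x)=-\gamma+\sum_{m=1}^{\infty}(-1)^{m-1}\zeta(m+1)x^{m}$, and replacing $x$ by $-x$ gives $\Psi(1-x)=-\gamma-\sum_{m=1}^{\infty}\zeta(m+1)x^{m}$. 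Adding, the odd-index terms cancel in pairs, so
\begin{equation*}
\Psi(1+x)+\Psi(1-x)=-2\gamma-2\sum_{k=1}^{\infty}\zeta(2k+1)x^{2k}.
\end{equation*}

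Substituting this into the displayed formula for $R(x)$, the two copies of $-2\gamma$ cancel and we arrive at $R(x)=1/x+\sum_{k=1}^{\infty}2\zeta(2k+1)x^{2k}$, which is (2.3); the power series converges for $|x|<1$ since $\zeta(2k+1)\downarrow 1$, hence in particular on $(0,1/2]$. I do not expect a genuine obstacle here: the substance is just the two pieces of bookkeeping already flagged — the reindexing that produces $\Psi(1+x)=-\gamma+\sum_{k}\bigl(1/k-1/(k+x)\bigr)$ from (2.2) and the interchange of the double sum — both justified by absolute convergence. An essentially equivalent route bypasses $\Psi(1+x)$ altogether: add the partial-fraction series of $\Psi(x)$ and $\Psi(1-x)$ from (2.2) directly, reindex so as to pair $1/(k+x)$ with $1/(k-x)$, use the telescoping identity $\sum_{m\geq 2}\bigl(1/m-1/(m-1)\bigr)=-1$ to absorb the leftover constant, and expand $1/(k-x)+1/(k+x)-2/k=2\sum_{j\geq 1}x^{2j}/k^{2j+1}$ before summing on $k$.
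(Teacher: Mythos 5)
Your proof is correct, but it takes a genuinely different route from the paper's. The paper sets $f(x)=xR(x)$, computes $\lim_{x\to 0}f^{(n)}(x)$ for every $n$ directly from the series (2.2) for $\Psi^{(n)}$ (finding that the even-order limits vanish while the odd-order ones equal $2\,n!\,\zeta(n)$), and then writes down the Maclaurin series of $f$; dividing by $x$ gives (2.3). You instead establish the classical expansion $\Psi(1+x)=-\gamma+\sum_{m\ge 1}(-1)^{m-1}\zeta(m+1)x^{m}$ from (2.2) by a geometric-series expansion and an interchange of summation, combine it with $\Psi(x)=\Psi(1+x)-1/x$, and let the odd powers cancel when $\Psi(1+x)$ and $\Psi(1-x)$ are added. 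Your route is shorter and has the advantage of proving directly that the series converges to $R(x)$ for $|x|<1$ (the interchange of the double sum being justified by absolute convergence, as you note); the paper's computation, as written, only identifies the Taylor coefficients at $0$ and tacitly uses that $f$ equals its Maclaurin series there, a point your argument never needs. Both arguments ultimately rest on the same partial-fraction data in (2.2), so the difference is one of organization and of how convergence is handled rather than of underlying input.
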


\begin{proof}
Let $f(x)=xR(x)$. Then simple computations lead to
\begin{equation}
\lim_{x\rightarrow 0}f(x)=\lim_{x\rightarrow
0}x\left(\frac{1}{x}+\frac{1}{1-x}-\sum_{k=1}^{\infty}\frac{x}{k(k+x)}-\sum_{k=1}^{\infty}\frac{1-x}{k(k+1-x)}\right)=1,
\end{equation}
\begin{align}
f'(x)=&R(x)+xR'(x)=-\Psi(x)-\Psi(1-x)-2\gamma+x\left[-\Psi'(x)+\Psi'(1-x)\right]\notag\\
=&\frac{1}{x}+\frac{1}{1-x}-\sum_{k=1}^{\infty}\frac{x}{k(k+x)}-\sum_{k=1}^{\infty}\frac{1-x}{k(k+1-x)}\notag\\
&+x\left[-\sum_{k=0}^{\infty}\frac{1}{(k+x)^2}+\sum_{k=0}^{\infty}\frac{1}{(k+1-x)^2}\right]\notag\\
=&\frac{1}{1-x}-\sum_{k=1}^{\infty}\frac{x^2}{k(k+x)^2}-\sum_{k=1}^{\infty}\frac{1-x}{k(k+1-x)}
+\sum_{k=0}^{\infty}\frac{x}{(k+1-x)^2},\notag
\end{align}
\begin{equation}
\lim_{x\rightarrow 0}f'(x)=0.
\end{equation}

For $n\geq 2$, it follows from (2.2) that
\begin{align}
R^{(n)}(x)=&-\Psi^{(n)}(x)-(-1)^n\Psi^{(n)}(1-x)\notag\\
=&-n!(-1)^{n+1}\sum_{k=0}^{\infty}\frac{1}{(x+k)^{n+1}}-(-1)^{n}n!(-1)^{n+1}\sum_{k=0}^{\infty}\frac{1}{(1-x+k)^{n+1}}\notag\\
=&n!\left[\sum_{k=0}^{\infty}\frac{1}{(1-x+k)^{n+1}}+(-1)^n\frac{1}{(x+k)^{n+1}}\right].\notag
\end{align}
Therefore, we get
\begin{align}
f^{(n)}(x)=&nR^{(n-1)}(x)+xR^{(n)}(x)\notag\\
=&n!\sum_{k=0}^{\infty}\left[\frac{1}{(1-x+k)^{n}}+(-1)^{n-1}\frac{1}{(x+k)^{n}}\right]\notag\\
&+n!\sum_{k=0}^{\infty}\left[\frac{x}{(1-x+k)^{n+1}}+(-1)^n\frac{x}{(x+k)^{n+1}}\right]\notag\\
=&n!\sum_{k=0}^{\infty}\left[\frac{1+k}{(1-x+k)^{n+1}}+\frac{(-1)^{n-1}k}{(x+k)^{n+1}}\right].\notag
\end{align}
Furthermore, if $n$ is even, then
\begin{align}
f^{(n)}(x)=&n!\sum_{k=0}^{\infty}\left[\frac{1+k}{(1-x+k)^{n+1}}-\frac{k}{(x+k)^{n+1}}\right]\notag\\
=&n!\sum_{k=0}^{\infty}\left[\frac{1+k}{(1-x+k)^{n+1}}-\frac{1+k}{(x+1+k)^{n+1}}\right],\notag
\end{align}
\begin{equation}
\lim_{x\rightarrow 0}f^{(n)}(x)=0.
\end{equation}
If $n$ is odd, then
\begin{align}
f^{(n)}(x)=&n!\sum_{k=0}^{\infty}\left[\frac{1+k}{(1-x+k)^{n+1}}+\frac{k}{(x+k)^{n+1}}\right]\notag\\
=&n!\sum_{k=0}^{\infty}\left[\frac{1+k}{(1-x+k)^{n+1}}+\frac{1+k}{(x+1+k)^{n+1}}\right],\notag
\end{align}
\begin{equation}
\lim_{x\rightarrow
0}f^{(n)}(x)=2n!\sum_{k=0}^{\infty}\frac{1}{(k+1)^n}=2n!\zeta(n).
\end{equation}

Equations (2.4)-(2.7) implies that $f(x)$ has the following Taylor
series expansion
\begin{equation*}
f(x)=1+2\zeta(3)x^3+2\zeta(5)x^5+\cdots=1+\sum_{k=1}^{\infty}2\zeta(2k+1)x^{2k+1},\
x\in(0,1).
\end{equation*}
Therefore, (2.3) follows.
\end{proof}

\begin{thm} The function
\begin{equation*}
\eta(x)=\frac{\pi/\sin(\pi x)-R(x)}{x(1-x)}
\end{equation*}
is strictly decreasing from $(0,1/2]$ onto $[4\pi-16\log2,\pi^2/6)$.
\end{thm}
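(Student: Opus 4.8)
The plan is to rewrite $\eta$ as an explicit series in which $x$ enters only through the quantity $t=x(1-x)$, and in a way that is visibly decreasing in $t$; since $x\mapsto x(1-x)$ is strictly increasing on $(0,1/2]$, the monotonicity of $\eta$ will follow immediately.

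First I would use two partial-fraction expansions, valid for $x\in(0,1)$. The Mittag--Leffler expansion of the cosecant gives
\[
\frac{\pi}{\sin(\pi x)}-\frac1x=\sum_{k=1}^{\infty}(-1)^k\left(\frac{1}{x-k}+\frac{1}{x+k}\right)=2x\sum_{k=1}^{\infty}\frac{(-1)^{k-1}}{k^2-x^2},
\]
and a standard manipulation of the definition (2.1) of $R$ and the digamma series (2.2) (equivalently, Theorem 2.2 in ``un-expanded'' form) gives
\[
R(x)-\frac1x=\sum_{k=1}^{\infty}\left(\frac{1}{k+x}+\frac{1}{k-x}-\frac2k\right)=2x^2\sum_{k=1}^{\infty}\frac{1}{k(k^2-x^2)}.
\]
Both series converge absolutely, so with $g(x):=\pi/\sin(\pi x)-R(x)$ I may subtract termwise:
\[
g(x)=2x\sum_{k=1}^{\infty}\frac{k(-1)^{k-1}-x}{k(k^2-x^2)}=2x\left[\sum_{k\text{ odd}}\frac{1}{k(k+x)}-\sum_{k\text{ even}}\frac{1}{k(k-x)}\right],
\]
because for odd $k$ the summand $\frac{k-x}{k(k-x)(k+x)}$ collapses to $\frac{1}{k(k+x)}$, and for even $k$ the summand $\frac{-(k+x)}{k(k-x)(k+x)}$ collapses to $-\frac{1}{k(k-x)}$.

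Next I would pair the terms $k=2j-1$ and $k=2j$. Using the elementary identities $2j(2j-x)-(2j-1)(2j-1+x)=(4j-1)(1-x)$ and $(2j-1+x)(2j-x)=2j(2j-1)+x(1-x)$, the pairing yields
\[
g(x)=2x(1-x)\sum_{j=1}^{\infty}\frac{4j-1}{2j(2j-1)\,\bigl[\,2j(2j-1)+x(1-x)\,\bigr]},\qquad
\eta(x)=2\sum_{j=1}^{\infty}\frac{4j-1}{2j(2j-1)\,\bigl[\,2j(2j-1)+x(1-x)\,\bigr]}.
\]
Each summand is strictly decreasing in $t=x(1-x)\in[0,1/4]$, and $x\mapsto x(1-x)$ is strictly increasing on $(0,1/2]$, so $\eta$ is strictly decreasing there. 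The $j$th term is bounded uniformly by $\frac{4j-1}{[2j(2j-1)]^2}=\frac{1}{(2j-1)^2}-\frac{1}{(2j)^2}$, which is summable, so the series converges uniformly, $\eta$ is continuous, and its endpoint values may be taken termwise:
\[
\eta(0^+)=2\sum_{j=1}^{\infty}\left(\frac{1}{(2j-1)^2}-\frac{1}{(2j)^2}\right)=2\sum_{n=1}^{\infty}\frac{(-1)^{n-1}}{n^2}=\frac{\pi^2}{6},
\]
while $R(1/2)=4\log2$ gives $\eta(1/2)=(\pi-4\log2)/(1/4)=4\pi-16\log2$ directly. Strict monotonicity and continuity then yield the image $[4\pi-16\log2,\pi^2/6)$.

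The one genuinely clever step is the collapse in the middle paragraph: one must notice that, after subtracting the two partial-fraction series, the factor $x(1-x)$ comes out of $g$, leaving a series that depends on $x$ only through $x(1-x)$. I expect this to be the main difficulty; a direct attack through the Taylor series $R(x)=1/x+\sum 2\zeta(2k+1)x^{2k}$ of Theorem 2.2 and the Taylor series of $\pi/\sin(\pi x)$ is possible but awkward, because the resulting Taylor coefficients of $\eta$ do not all share the same sign, so monotonicity cannot be read off by inspection.
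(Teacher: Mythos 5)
Your proof is correct, and it takes a genuinely different route from the paper's. You derive the closed form
$\eta(x)=2\sum_{j\ge 1}\frac{4j-1}{2j(2j-1)\left[2j(2j-1)+x(1-x)\right]}$
by subtracting the cosecant and digamma partial-fraction expansions and pairing the terms $k=2j-1$, $k=2j$; every step checks out (the two series are absolutely convergent so termwise subtraction and the odd/even split are legitimate, the identities $2j(2j-x)-(2j-1)(2j-1+x)=(4j-1)(1-x)$ and $(2j-1+x)(2j-x)=2j(2j-1)+x(1-x)$ are correct, and the Weierstrass test justifies the termwise endpoint limits $\eta(0^+)=2\sum_{n\ge1}(-1)^{n-1}n^{-2}=\pi^2/6$ and $\eta(1/2)=4\pi-16\log 2$). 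The paper instead expands $\pi x/\sin(\pi x)$ and $xR(x)$ into Taylor series (the latter via Theorem 2.2), sets $(1-x)^2\eta'(x)=g(x)$, factors $g'(x)=(1-x)g_1(x)$, and proves $g_1>0$ by truncating after a few terms, bounding the tails with $\zeta(9)$ and closed forms for $\sum(2k+1)(2k+2)x^{2k}$, and finally reducing to the positivity of an explicit degree-nine polynomial on $(0,1/2]$, which is asserted to be ``not difficult to verify''; monotonicity then follows from $g$ increasing and $g(1/2)=\eta'(1/2)/4=0$. Your argument is shorter, avoids all numerical estimates and the unproved polynomial claim, and yields more: it exhibits $\eta$ as a function of $t=x(1-x)$ alone (hence symmetric about $x=1/2$) with manifestly decreasing, indeed completely monotone, dependence on $t$. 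The only cosmetic slips are that $x(1-x)$ ranges over $(0,1/4]$ rather than $[0,1/4]$ for $x\in(0,1/2]$, and that strict increase of $x\mapsto x(1-x)$ up to the endpoint $x=1/2$ deserves the one-line remark that $x_1(1-x_1)<x_2(1-x_2)$ whenever $0<x_1<x_2\le 1/2$ even though the derivative vanishes at $1/2$; neither affects the argument.
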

\begin{proof} Clearly (2.1) gives $\eta(1/2)=4\pi-16\log2$.
Simple computations lead to
\begin{equation*}
x^2(1-x)^2\eta'(x)=-\left[\frac{\pi^2\cos(\pi x)}{\sin^2(\pi
x)}+R'(x)\right]x(1-x)-\left[\frac{\pi}{\sin(\pi
x)}-R(x)\right](1-2x),
\end{equation*}
\begin{equation}
\eta'\left(\frac{1}{2}\right)=0.
\end{equation}

Making use of the well-known formulas (see [1,
p.75, 4.3.68] and [7, p.16 and p.56])
\begin{equation*}
\frac{1}{\sin{x}}=\frac{1}{x}+\sum_{k=1}^{\infty}\frac{2^{2k}-2}{(2k)!}|B_{2k}|x^{2k-1},\
|x|<\pi,
\end{equation*}
\begin{equation*}
B_{2k}=2(-1)^{k+1}\frac{\zeta(2k)(2k)!}{(2\pi)^{2k}},
\end{equation*}
where $B_{k}$ is the Bernouli number,  we can rewrite $\eta(x)$ as
\begin{align}
\eta(x)=&\frac{\pi x/\sin(\pi x)-xR(x)}{x^2(1-x)}
=\frac{\displaystyle\sum\limits_{k=1}^{\infty}\frac{2^{2k}-2}{2^{2k-1}}\zeta(2k)x^{2k}-\sum\limits_{k=1}^{\infty}2\zeta(2k+1)x^{2k+1}}{x^2(1-x)}\notag\\
=&\frac{\displaystyle\sum\limits_{k=0}^{\infty}\frac{2^{2k+2}-2}{2^{2k+1}}\zeta(2k+2)x^{2k}-\sum\limits_{k=0}^{\infty}2\zeta(2k+3)x^{2k+1}}{(1-x)}.\notag
\end{align}

Therefore, $\eta(0^+)=\zeta(2)=\pi^2/6$ and
\begin{align}
&(1-x)^2\eta'(x)\notag\\
=&\left[\sum\limits_{k=1}^{\infty}\frac{2^{2k+2}-2}{2^{2k+1}}(2k)\zeta(2k+2)x^{2k-1}-\sum\limits_{k=0}^{\infty}2(2k+1)\zeta(2k+3)x^{2k}\right](1-x)\notag\\
&+\sum\limits_{k=0}^{\infty}\frac{2^{2k+2}-2}{2^{2k+1}}\zeta(2k+2)x^{2k}-\sum\limits_{k=0}^{\infty}2\zeta(2k+3)x^{2k+1}\notag\\
=&\sum\limits_{k=0}^{\infty}\frac{2^{2k+4}-2}{2^{2k+3}}(2k+2)\zeta(2k+4)x^{2k+1}-\sum\limits_{k=0}^{\infty}2(2k+1)\zeta(2k+3)x^{2k}\notag\\
&-\sum\limits_{k=0}^{\infty}\frac{2^{2k+4}-2}{2^{2k+3}}(2k+2)\zeta(2k+4)x^{2k+2}+\sum\limits_{k=0}^{\infty}2(2k+1)\zeta(2k+3)x^{2k+1}\notag\\
&+\sum\limits_{k=0}^{\infty}\frac{2^{2k+2}-2}{2^{2k+1}}\zeta(2k+2)x^{2k}-\sum\limits_{k=0}^{\infty}2\zeta(2k+3)x^{2k+1}\notag\\
=&\sum\limits_{k=0}^{\infty}\frac{2^{2k+2}-2}{2^{2k+1}}\zeta(2k+2)x^{2k}+\sum\limits_{k=0}^{\infty}2(2k+1)\zeta(2k+3)x^{2k+1}\notag\\
&+\sum\limits_{k=0}^{\infty}\frac{2^{2k+4}-2}{2^{2k+3}}(2k+2)\zeta(2k+4)x^{2k+1}-\sum\limits_{k=0}^{\infty}2(2k+1)\zeta(2k+3)x^{2k}\notag\\
&-\sum\limits_{k=0}^{\infty}2\zeta(2k+3)x^{2k+1}-\sum\limits_{k=0}^{\infty}\frac{2^{2k+4}-2}{2^{2k+3}}(2k+2)\zeta(2k+4)x^{2k+2}\notag\\
=&:g(x).
\end{align}
Differentiating $g$ yields
\begin{align}
g'(x)=&\sum\limits_{k=1}^{\infty}\frac{2^{2k+2}-2}{2^{2k+1}}(2k)\zeta(2k+2)x^{2k-1}+\sum\limits_{k=0}^{\infty}2(2k+1)^2\zeta(2k+3)x^{2k}\notag\\
&+\sum\limits_{k=0}^{\infty}\frac{2^{2k+4}-2}{2^{2k+3}}(2k+1)(2k+2)\zeta(2k+4)x^{2k}\notag\\
&-\sum\limits_{k=1}^{\infty}2(2k)(2k+1)\zeta(2k+3)x^{2k-1}-\sum\limits_{k=0}^{\infty}2(2k+1)\zeta(2k+3)x^{2k}\notag\\
&-\sum\limits_{k=0}^{\infty}\frac{2^{2k+4}-2}{2^{2k+3}}(2k+2)^2\zeta(2k+4)x^{2k+1}\notag\\
=&\sum\limits_{k=0}^{\infty}\frac{2^{2k+4}-2}{2^{2k+3}}(2k+2)\zeta(2k+4)x^{2k+1}-\sum\limits_{k=0}^{\infty}\frac{2^{2k+4}-2}{2^{2k+3}}(2k+2)^2\zeta(2k+4)x^{2k+1}\notag\\
&+\sum\limits_{k=0}^{\infty}2(2k+1)^2\zeta(2k+3)x^{2k}-\sum\limits_{k=0}^{\infty}2(2k+1)\zeta(2k+3)x^{2k}\notag\\
&+\sum\limits_{k=0}^{\infty}\frac{2^{2k+4}-2}{2^{2k+3}}(2k+1)(2k+2)\zeta(2k+4)x^{2k}\notag\\
&-\sum\limits_{k=0}^{\infty}2(2k+2)(2k+3)\zeta(2k+5)x^{2k+1}\notag\\
=&-\sum\limits_{k=0}^{\infty}\frac{2^{2k+4}-2}{2^{2k+3}}(2k+1)(2k+2)\zeta(2k+4)x^{2k+1}\notag\\
&+\sum\limits_{k=0}^{\infty}2(2k+2)(2k+3)\zeta(2k+5)x^{2k+2}\notag\\
&+\sum\limits_{k=0}^{\infty}\frac{2^{2k+4}-2}{2^{2k+3}}(2k+1)(2k+2)\zeta(2k+4)x^{2k}\notag\\
&-\sum\limits_{k=0}^{\infty}2(2k+2)(2k+3)\zeta(2k+5)x^{2k+1}\notag\\
=&(1-x)g_{1}(x),
\end{align}
where
\begin{align}
g_{1}(x)=&\sum\limits_{k=0}^{\infty}\frac{2^{2k+4}-2}{2^{2k+3}}(2k+1)(2k+2)\zeta(2k+4)x^{2k}\notag\\
&-\sum\limits_{k=0}^{\infty}2(2k+2)(2k+3)\zeta(2k+5)x^{2k+1},\
x\in(0,1/2].
\end{align}
Note that
\begin{align}
g_{1}(x)=&\frac{7}{2}\zeta(4)+\frac{93}{4}\zeta(6)x^2+\sum\limits_{k=2}^{\infty}\frac{2^{2k+4}-2}{2^{2k+3}}(2k+1)(2k+2)\zeta(2k+4)x^{2k}\notag\\
&-12\zeta(5)x-40\zeta(7)x^3-\sum\limits_{k=2}^{\infty}2(2k+2)(2k+3)\zeta(2k+5)x^{2k+1}\notag\\
>&3.788-12.444x+23.653x^2-40.334x^3+\frac{127}{64}\sum\limits_{k=2}^{\infty}(2k+1)(2k+2)x^{2k}\notag\\
&-2\zeta(9)\sum\limits_{k=2}^{\infty}(2k+2)(2k+3)x^{2k+1}\notag\\
>&3.788-12.444x+23.653x^2-40.334x^3+1.984\left[\frac{12x^8-34x^6+30x^4}{(1-x^2)^3}\right]\notag\\
&-2.005\left[\frac{20x^9-54x^7+42x^5}{(1-x^2)^3}\right]\notag\\
=&\frac{1}{1000(1-x^2)^3}\big[3788-12444x+12289x^2-3002x^3-75x^4-540x^5\notag\\
&-285x^6-288x^7+155x^8+234x^9\big].
\end{align}

It is not difficult to verify that the polynomial function
$x\rightarrow
3788-12444x+12289x^2-3002x^3-75x^4-540x^5-285x^6-288x^7+155x^8+234x^9$
is strictly decreasing and positive on $(0,1/2]$. Therefore, $g(x)$
is strictly increasing on $(0,1/2]$, $g(x)<g(1/2)=\eta'(1/2)/4=0$
for $x\in(0,1/2]$ and $\eta(x)$ is strictly decreasing on $(0,1/2]$
follows from (2.8) and (2.9) together with (2.10)-(2.12).
\end{proof}

\begin{cor} The inequality
\begin{equation}
\frac{\pi}{R(x)\sin(\pi x)}-1>\frac{\sin(\pi x)-\pi x(1-x)}{\sin(\pi
x)\left[R(x)-1\right]}
\end{equation}
holds for all $x\in(0,1/2]$.
\end{cor}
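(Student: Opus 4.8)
The plan is to clear denominators in (2.13), use the definitions of the auxiliary functions $\eta$ and $\xi$ to make $R(x)$ disappear, and thereby reduce (2.13) to a one‑variable inequality that can be settled purely by the extreme values supplied by Lemma 2.1 and Theorem 2.3.

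First note that for $x\in(0,1/2]$ one has $R(x)>1/x\ge 2>1$ and $\sin(\pi x)>0$, so both denominators $R(x)\sin(\pi x)$ and $\sin(\pi x)[R(x)-1]$ in (2.13) are positive. Writing $p=x(1-x)$ and $s=\sin(\pi x)$, inequality (2.13) is therefore equivalent to $(\pi-R(x)s)(R(x)-1)>R(x)(s-\pi p)$, which after expansion collapses to
\[\pi R(x)(1+p)-\pi-R(x)^2 s>0 .\]
Now I would substitute $R(x)=\pi/s-\eta(x)p$ (the definition of $\eta$ in Theorem 2.3, rearranged) and then $\pi/s=1/p+\eta(x)p-\xi(x)$ (obtained by combining the previous identity with $\xi(x)=1/p-R(x)$ from Lemma 2.1). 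Carrying out the resulting algebra, every occurrence of $1/s$ and of $R(x)$ cancels, and after dividing by $p>0$ the last display becomes the equivalent inequality
\[\pi\bigl[\eta(x)-\xi(x)\bigr]>\eta(x)^2\,x(1-x)\sin(\pi x),\qquad x\in(0,1/2].\]

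To prove this, I would estimate the two sides separately using only the monotonicity already established. By Theorem 2.3, $\eta$ is strictly decreasing on $(0,1/2]$, so $\eta(x)\ge\eta(1/2)=4\pi-16\log2>0$ and $\eta(x)<\eta(0^+)=\pi^2/6$; by Lemma 2.1, $\xi(x)\le\xi(1/2)=4-4\log2$. Hence the left‑hand side is at least $\pi(4\pi-12\log2-4)$, while, since also $x(1-x)\le1/4$ and $\sin(\pi x)\le1$ on $(0,1/2]$, the right‑hand side is strictly less than $(\pi^2/6)^2\cdot\tfrac14=\pi^4/144$. The proof then closes with the numerical check $\pi(4\pi-12\log2-4)>\pi^4/144$, equivalently $576\pi-1728\log2-576>\pi^3$ (numerically $\approx 35.8>31.0$), which holds with room to spare.

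The only real obstacle is organisational rather than analytic: one must hit on the two substitutions that eliminate $R(x)$ and land exactly on the clean form $\pi[\eta(x)-\xi(x)]>\eta(x)^2x(1-x)\sin(\pi x)$, after which every quantity involved is either constant or monotone and the crude bounds are enough. One should keep an eye on the final numerical gap — it is genuine but not large ($\pi(4\pi-12\log2-4)\approx0.781$ versus $\pi^4/144\approx0.676$) — so there is no slack beyond replacing $\eta$, $\xi$, $x(1-x)$ and $\sin(\pi x)$ by their sharp extreme values on $(0,1/2]$.
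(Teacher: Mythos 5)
Your proof is correct, and it diverges from the paper's in the final estimation step in a way worth noting. Both arguments begin identically: clearing the (positive) denominators turns (2.13) into $\pi R(x)[1+x(1-x)]-\pi-R(x)^2\sin(\pi x)>0$. The paper then rewrites this as $R(x)\sin(\pi x)\,\eta(x)-\pi\,\xi(x)>0$, so it still carries a factor $R(x)\sin(\pi x)$ and must invoke an external fact (Qiu--Feng, cited as [15, Theorem 2]) that $x\mapsto R(x)\sin(\pi x)$ decreases from $(0,1/2]$ onto $[4\log 2,\pi)$, giving the lower bound $(4\log2)(4\pi-16\log2)-\pi(4-4\log2)\approx 0.236>0$. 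You instead eliminate $R(x)$ entirely via the identities $R=\pi/\sin(\pi x)-\eta\,x(1-x)$ and $\pi/\sin(\pi x)=1/[x(1-x)]+\eta\,x(1-x)-\xi$; I checked the algebra and the left side does factor exactly as $x(1-x)\bigl[\pi(\eta-\xi)-\eta^2 x(1-x)\sin(\pi x)\bigr]$, reducing the claim to $\pi[\eta(x)-\xi(x)]>\eta(x)^2 x(1-x)\sin(\pi x)$. Your endpoint bounds from Lemma 2.1 and Theorem 2.3 then give $\pi(4\pi-12\log2-4)\approx 0.781$ on the left versus less than $\pi^4/144\approx 0.676$ on the right, and the margin is genuine. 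What your route buys is self-containedness: it needs only the two monotonicity results already proved in Section 2 plus the trivial bounds $x(1-x)\le 1/4$ and $\sin(\pi x)\le 1$, dispensing with the citation to [15]; the cost is a slightly heavier substitution and a tighter (though still comfortable) numerical margin than the paper's. You were also right to verify $R(x)>1$ before cross-multiplying, a point the paper leaves implicit.
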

\begin{proof} We clearly see that inequality (2.13) is equivalent to
\begin{equation*}
\frac{\pi[1+x(1-x)]R(x)-\pi-\sin(\pi x)R(x)^2}{x(1-x)}>0
\end{equation*}
or
\begin{equation*}
R(x)\sin(\pi x)\frac{\pi/\sin(\pi
x)-R(x)}{x(1-x)}-\pi\frac{1-x(1-x)R(x)}{x(1-x)}>0.
\end{equation*}

It follows from Lemma 2.1 and Theorem 2.3 together with the fact
that $x\rightarrow R(x)\sin(\pi x)$ is strictly decreasing from
$(0,1/2]$ onto $[4\log2,\pi)$ (see [15, Theorem 2]) that
\begin{align*}
&R(x)\sin(\pi x)\frac{\pi/\sin(\pi
x)-R(x)}{x(1-x)}-\pi\frac{1-x(1-x)R(x)}{x(1-x)}\\
>&4\log2(4\pi-16\log2)-\pi(4-4\log2)=(20\log{2}-4)\pi-64\log^2{2}=0.236\cdots
\end{align*}
for $x\in(0,1/2]$.
\end{proof}

\begin{cor} The inequality
\begin{equation}
x(1-x)>\frac{\pi-R(x)\sin(\pi x)}{R(x)\sin(\pi x)}
\end{equation}
holds for all $x\in(0,1/2]$.
\end{cor}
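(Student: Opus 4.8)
The plan is to recast inequality (2.14) into the equivalent one-variable comparison $R(x)>\eta(x)$ on $(0,1/2]$, with $\eta$ the function of Theorem 2.3, and then to bound the two sides separately by constants.

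First I would note that on $(0,1/2]$ we have $0<\sin(\pi x)\le 1$, and, by the monotonicity of $x\mapsto R(x)\sin(\pi x)$ recalled in the proof of Corollary 2.4, $R(x)\sin(\pi x)\in[4\log 2,\pi)$; in particular $R(x)\sin(\pi x)>0$ and $\pi-R(x)\sin(\pi x)>0$, so the right-hand side of (2.14) is positive. Multiplying (2.14) by $R(x)\sin(\pi x)>0$ gives $x(1-x)R(x)\sin(\pi x)>\pi-R(x)\sin(\pi x)$; dividing by $x(1-x)>0$ and then by $\sin(\pi x)>0$ shows that (2.14) is equivalent to
\[
R(x)>\frac{\pi/\sin(\pi x)-R(x)}{x(1-x)}=\eta(x),\qquad x\in(0,1/2].
\]

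Next, Theorem 2.3 tells us that $\eta$ is strictly decreasing on $(0,1/2]$ with $\eta(0^+)=\pi^2/6$, whence $\eta(x)<\pi^2/6$ for every $x\in(0,1/2]$. On the other hand, Lemma 2.1 gives $R(x)=1/[x(1-x)]-\xi(x)\ge 1/[x(1-x)]-(4-4\log 2)\ge 4-(4-4\log 2)=4\log 2$, where we used $x(1-x)\le 1/4$; alternatively one may simply write $R(x)\ge R(x)\sin(\pi x)\ge 4\log 2$ since $0<\sin(\pi x)\le 1$. Because $4\log 2=2.77\cdots>1.64\cdots=\pi^2/6$, we conclude that $R(x)\ge 4\log 2>\pi^2/6>\eta(x)$ for all $x\in(0,1/2]$, which is precisely the reformulated inequality and hence (2.14).

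I do not expect any serious obstacle here: the only slightly non-obvious step is the equivalence (2.14) $\Longleftrightarrow R(x)>\eta(x)$, obtained by clearing the (provably positive) denominators; after that the estimate follows at once by comparing the bound $\eta(x)<\pi^2/6$ from Theorem 2.3 with the uniform lower bound $R(x)\ge 4\log 2$ from Lemma 2.1, together with the elementary numerical inequality $4\log 2>\pi^2/6$.
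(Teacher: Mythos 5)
Your proof is correct and follows essentially the same route as the paper: both reduce (2.14) to comparing $R(x)$ (or $R(x)\sin(\pi x)$) against $\sin(\pi x)\eta(x)$ and conclude from the bounds $4\log 2$ and $\pi^2/6$ supplied by Theorem 2.3. The only cosmetic difference is that you derive the lower bound $R(x)\ge 4\log 2$ self-containedly from Lemma 2.1, whereas the paper invokes the monotonicity of $x\mapsto R(x)\sin(\pi x)$ from [15].
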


\begin{proof} It follows from Theorem 2.3 that
\begin{align}
&R(x)\sin(\pi x)-\frac{\pi-R(x)\sin(\pi x)}{x(1-x)}>\log16-\sin(\pi
x)\left[\frac{\pi/\sin(\pi x)-R(x)}{x(1-x)}\right]\notag\\
&>\log16-\sin(\pi
x)\frac{\pi^2}{6}>\log16-\frac{\pi^2}{6}=1.127\cdots>0
\end{align}
for $x\in(0,1/2]$. Therefore, inequality (2.14) follows easily from (2.15).
\end{proof}

\bigskip
\section{Proof of Theorem 1.1}
\setcounter{equation}{0}

\begin{lem}(see [5, Theorem 1.25])
For $-\infty<a<b<\infty$, let $f,g:[a,b]\rightarrow{\mathbb{R}}$ be
continuous on $[a,b]$, and be differentiable on $(a,b)$, let
$g'(x)\neq 0$ on $(a,b)$. If $f^{\prime}(x)/g^{\prime}(x)$ is
increasing (decreasing) on $(a,b)$, then so are
$$\frac{f(x)-f(a)}{g(x)-g(a)}\ \ \mbox{and}\ \ \frac{f(x)-f(b)}{g(x)-g(b)}.$$
If $f^{\prime}(x)/g^{\prime}(x)$ is strictly monotone, then the
monotonicity in the conclusion is also strict.
\end{lem}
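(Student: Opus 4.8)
The plan is to prove the monotone form of l'Hôpital's rule directly from Cauchy's mean value theorem, avoiding any appeal to second derivatives (which are not assumed here). First I would record a preliminary reduction. Since $g'$ has the Darboux (intermediate value) property and never vanishes on $(a,b)$, it keeps a constant sign there, so $g$ is strictly monotone and each denominator $g(x)-g(a)$ (for $x\in(a,b]$) and $g(x)-g(b)$ (for $x\in[a,b)$) is nonzero. Replacing $g$ by $-g$ and/or $f$ by $-f$ flips the monotonicity of $f'/g'$ and of the two quotients in tandem, so it suffices to treat the single case $g'>0$ on $(a,b)$ with $f'/g'$ increasing, and to show that both quotients are then increasing, strictly so if $f'/g'$ is strictly increasing.

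The core tool is the elementary mediant inequality: if $q_1,q_2>0$ and $p_1/q_1\le p_2/q_2$, then
\begin{equation*}
\frac{p_1}{q_1}\le\frac{p_1+p_2}{q_1+q_2}\le\frac{p_2}{q_2},
\end{equation*}
with strict inequalities whenever $p_1/q_1<p_2/q_2$. For the first quotient $\phi(x)=[f(x)-f(a)]/[g(x)-g(a)]$, I would fix $a<s<t\le b$ and apply Cauchy's mean value theorem on $[a,s]$ and on $[s,t]$ to produce points $\xi\in(a,s)$ and $\eta\in(s,t)$ with $[f(s)-f(a)]/[g(s)-g(a)]=f'(\xi)/g'(\xi)$ and $[f(t)-f(s)]/[g(t)-g(s)]=f'(\eta)/g'(\eta)$. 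Since $\xi<\eta$ and $f'/g'$ is increasing, the first ratio is at most the second; the mediant inequality with $p_1=f(s)-f(a)$, $p_2=f(t)-f(s)$ and the corresponding positive $q_i$, whose sum is $g(t)-g(a)$, then yields $\phi(s)\le\phi(t)$.

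The second quotient $\psi(x)=[f(x)-f(b)]/[g(x)-g(b)]$ is handled by the same device read from the right endpoint. For $a\le s<t<b$ I would apply Cauchy's theorem on $[s,t]$ and on $[t,b]$, obtaining points $\eta\in(s,t)$ and $\xi\in(t,b)$ with $\eta<\xi$; increasingness of $f'/g'$ gives $[f(t)-f(s)]/[g(t)-g(s)]\le[f(b)-f(t)]/[g(b)-g(t)]$. Writing $\psi(s)=[f(b)-f(s)]/[g(b)-g(s)]$ as the mediant of these two ratios, both denominators positive and summing to $g(b)-g(s)$, gives $\psi(s)\le[f(b)-f(t)]/[g(b)-g(t)]=\psi(t)$. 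In both quotients, upgrading "increasing" to "strictly increasing" makes the ordering $f'(\xi)/g'(\xi)<f'(\eta)/g'(\eta)$ strict, and the strict form of the mediant inequality then delivers strict monotonicity.

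I expect the only real subtlety to be the bookkeeping of signs in the preliminary reduction and the correct pairing of subintervals for $\psi$, namely reading the increments toward $b$ rather than away from $a$; the entire analytic content is carried by Cauchy's mean value theorem together with the mediant inequality, so no genuine obstacle remains once those two ingredients are arranged.
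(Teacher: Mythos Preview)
Your argument is correct. The reduction via Darboux's theorem to the case $g'>0$ with $f'/g'$ increasing is clean, and the combination of Cauchy's mean value theorem on adjacent subintervals with the mediant inequality is exactly the right engine: for $\phi$ you compare $[a,s]$ against $[s,t]$, for $\psi$ you compare $[s,t]$ against $[t,b]$, and in each case the ordering $\xi<\eta$ (or $\eta<\xi$) is forced by the nesting of the subintervals, so the strict version goes through without further work.

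As for comparison with the paper: there is nothing to compare. The paper does not prove this lemma at all; it simply quotes it from reference~[5] (Anderson, Vamanamurthy, Vuorinen, \emph{Conformal Invariants, Inequalities, and Quasiconformal Maps}, Theorem~1.25) and uses it as a black box in the proof of Lemma~3.3. Your write-up therefore supplies a self-contained proof where the paper relies on an external citation. The argument you give is essentially the standard one found in the cited source, so you are not taking a different route so much as filling in what the authors chose to outsource.
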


\begin{lem} The inequality
\begin{equation}
\sin(\pi x)>\frac{\pi x(1-x)}{2}\left[2+x(1-x)\right]
\end{equation}
holds for all $x\in(0,1/2]$.
\end{lem}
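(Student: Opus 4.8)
The plan is to pass from the transcendental inequality to a polynomial one after dividing by $\pi x$, and then to settle the polynomial inequality by an easy concavity argument.

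First I would use the classical lower bound $\sin t>t-t^{3}/6$ for $t>0$. One clean way to justify it in our range is to note that for $0<\pi x\le \pi/2$ the Maclaurin series of $\sin(\pi x)$ is alternating with strictly decreasing terms, since $(\pi x)^{2}\le \pi^{2}/4<6\le (2j+2)(2j+3)$ for every $j\ge 0$; hence truncating after the term $-(\pi x)^{3}/6$ gives a strict underestimate. Thus, for $x\in(0,1/2]$,
\[
\sin(\pi x)>\pi x\Bigl(1-\frac{\pi^{2}x^{2}}{6}\Bigr),
\]
and it suffices to prove the stronger purely algebraic inequality $\pi x\bigl(1-\pi^{2}x^{2}/6\bigr)\ge \tfrac{\pi x(1-x)}{2}\bigl[2+x(1-x)\bigr]$ on $(0,1/2]$.

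Next I would divide through by $\pi x>0$ and expand both sides; a short computation shows that the difference of the two sides equals $\tfrac{x}{2}\,\chi(x)$, where
\[
\chi(x)=1+\Bigl(2-\frac{\pi^{2}}{3}\Bigr)x-x^{2}.
\]
So everything reduces to showing $\chi(x)>0$ for $x\in(0,1/2]$. Since $\chi$ is a quadratic with negative leading coefficient it is concave, so on $[0,1/2]$ it is minimized at an endpoint; and $\chi(0)=1>0$ while $\chi(1/2)=7/4-\pi^{2}/6>0$ because $\pi^{2}<21/2$. Hence $\chi>0$ on $[0,1/2]$ and the lemma follows.

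I do not expect a genuine obstacle: the work is routine, and the only points requiring care are keeping the inequality direction correct when dividing by $\pi x$ and verifying the algebraic identity that isolates the factor $\chi(x)$. (Alternatively, one could bypass the Taylor estimate and start from Euler's product $\sin(\pi x)=\pi x(1-x)(1+x)\prod_{n\ge 2}\bigl(1-x^{2}/n^{2}\bigr)$ together with the Weierstrass inequality $\prod_{n\ge 2}\bigl(1-x^{2}/n^{2}\bigr)\ge 1-(\pi^{2}/6-1)x^{2}$, which again leads to an equally simple polynomial inequality settled by concavity.)
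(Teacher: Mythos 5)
Your proof is correct, and it takes a genuinely different route from the paper. The paper works directly with $h(x)=\sin(\pi x)-\tfrac{\pi x(1-x)}{2}[2+x(1-x)]$ and differentiates four times: it locates a single sign change of $h^{(4)}$, deduces piecewise monotonicity of $h'''$, concludes $h'''<0$, hence $h''$ is decreasing, hence $h'$ is increasing-then-decreasing with $h'(0^+)=h'(1/2)=0$, and finally $h$ is increasing with $h(0^+)=0$. You instead replace $\sin(\pi x)$ by the cubic underestimate $\pi x(1-\pi^2x^2/6)$ (justified by the alternating-series remainder on $(0,\pi/2]$, where the term ratio $(\pi x)^2/((2j+2)(2j+3))\le\pi^2/24<1$ makes the grouping argument valid), which collapses the whole problem to the positivity of the concave quadratic $\chi(x)=1+(2-\pi^2/3)x-x^2$ on $[0,1/2]$; concavity plus $\chi(0)=1$ and $\chi(1/2)=7/4-\pi^2/6>0$ finishes it. I checked the algebra: dividing by $\pi x$ and subtracting does give $\tfrac{x}{2}\chi(x)$, and the margin $7/4-\pi^2/6\approx 0.105$ is comfortably positive, so the Taylor bound is not too lossy for this inequality. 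Your argument is shorter, avoids the delicate bookkeeping of signs of four derivatives, and makes the source of the inequality transparent (the $x^2$-coefficient comparison $\pi^2/6$ versus $1$ after normalization); the paper's calculus approach is self-contained in a different sense, requiring no series estimate but considerably more case analysis. Either proof suffices for the role the lemma plays later (namely, that $\lambda_2>0$).
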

\begin{proof} Let
\begin{equation}
h(x)=\sin(\pi x)-\frac{\pi x(1-x)}{2}\left[2+x(1-x)\right].
\end{equation}
Then simple computations lead to
\begin{equation}
h(0^+)=0,
\end{equation}
\begin{equation*}
h'(x)=\pi\left[\cos(\pi x)-(1-2x)(1+x-x^2)\right],
\end{equation*}
\begin{equation}
h'(0^+)=0, \quad h'\left(\frac{1}{2}\right)=0,
\end{equation}
\begin{equation*}
h''(x)=\pi\left[-\pi\sin(\pi x)+1+6x-6x^2)\right],
\end{equation*}
\begin{equation}
h''(0^+)=\pi, \quad
h''\left(\frac{1}{2}\right)=\pi\left(\frac{5}{2}-\pi\right)<0,
\end{equation}
\begin{equation*}
h'''(x)=\pi\left[6-12x-\pi^2\cos(\pi x)\right],
\end{equation*}
\begin{equation}
h'''(0^+)=6-\pi^2<0, \quad h'''\left(\frac{1}{2}\right)=0,
\end{equation}
\begin{equation}
h^{(4)}(x)=\pi\left[-12+\pi^3\sin(\pi x)\right],
\end{equation}
\begin{equation}
h^{(4)}(0^+)=-12\pi, \quad
h^{(4)}\left(\frac{1}{2}\right)=\pi\left(\pi^3-12\right)>0.
\end{equation}

From (3.7) and (3.8) we clearly see that there exists
$x_{0}\in(0,1/2]$ such that $h^{(4)}(x)<0$ for $x\in(0,x_{0})$ and
$h^{(4)}(x)>0$ for $x\in(x_{0},1/2]$. Thus $h'''(x)$ is strictly
decreasing on $(0,x_{0}]$ and strictly increasing on $[x_{0},1/2]$.

Equation (3.6) and the piecewise monotonicity of $h'''(x)$ implies
that $h'''(x)<0$ for $x\in(0,1/2]$. Hence $h''(x)$ is strictly
decreasing on $(0,1/2]$. Then (3.5) leads to the conclusion that there
exists $x_{1}\in(0,1/2]$ such that $h'(x)$ is strictly increasing on
$(0,x_{1}]$ and strictly decreasing on $[x_{1},1/2]$.

It follows from (3.3) and (3.4) together with the piecewise monotonicity of $h'(x)$ that
$h(x)$ is strictly increasing on $(0,1/2]$ and $h(x)>h(0^+)=0$ for $x\in (0,1/2]$.

Therefore, inequality (3.1) follows easily from (3.2) and $h(x)>0$ for $x\in (0,1/2]$.
\end{proof}

\begin{lem} For $a\in(0,1/2]$, defined the function $F$ on $(0,1)$
by
\begin{equation*}
F(r)=\frac{r^{4}\sin(\pi
a)-2a(1-a)r^{2}r'^{2}{\mathcal{K}_{a}}+2(1-a)({r'}^{2}-r^{2})({\mathcal{E}_{a}}-{r'}^{2}{\mathcal{K}_{a}})}{{r'}^{2}r^{4}}.
\end{equation*}
Then $F(r)$ is strictly increasing from $(0,1)$ onto $(\sin(\pi
a)-\pi a(1-a)-(\pi/2)a^{2}(1-a)^{2},a(1-a)\sin(\pi a))$.
\end{lem}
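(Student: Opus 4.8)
The plan is to analyse $F$ through its Taylor expansion at $r=0$ together with a single use of L'Hôpital's rule at $r=1$. Write $x=r^{2}$, set $w_{n}=(a)_{n}(1-a)_{n}/(n!)^{2}$ so that $\mathcal{K}_{a}=\frac{\pi}{2}\sum_{n\ge 0}w_{n}x^{n}$, and let $\Phi(r)$ be the numerator of $F$ and $\Psi(r)=r^{4}{r'}^{2}=x^{2}(1-x)$ its denominator. Integrating the second identity in (1.4) from $r=0$, where $\mathcal{E}_{a}-{r'}^{2}\mathcal{K}_{a}=0$, gives
\[
\mathcal{E}_{a}-{r'}^{2}\mathcal{K}_{a}=\frac{\pi a}{2}\sum_{m\ge 1}\frac{w_{m-1}}{m}\,x^{m}.
\]
Substituting this and the series for $\mathcal{K}_{a}$ into $\Phi$, I expect the $x^{0}$ and $x^{1}$ coefficients to cancel, leaving $\Phi=\sum_{m\ge 2}\phi_{m}x^{m}$ with $\phi_{2}=\sin(\pi a)-\pi a(1-a)-\frac{\pi}{2}a^{2}(1-a)^{2}$ and, for $m\ge 3$,
\[
\phi_{m}=\pi a(1-a)\left(-\frac{m-1}{m}\,w_{m-1}+\frac{m-3}{m-1}\,w_{m-2}\right).
\]
Then $F(r)=\Phi/\Psi=(1-x)^{-1}\sum_{k\ge 0}\phi_{k+2}x^{k}=\sum_{N\ge 0}c_{N}x^{N}$ with $c_{N}=\sum_{m=2}^{N+2}\phi_{m}$; in particular $F(0^{+})=c_{0}=\phi_{2}$, which is positive by Lemma 3.2. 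This settles the left endpoint.

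For the right endpoint I would differentiate $\Phi$ using (1.3)–(1.4); after the $\mathcal{K}_{a}$–terms cancel this should reduce to
\[
\Phi'(r)=4r^{3}\sin(\pi a)-4(1-a)\bigl[a(1-a)+2\bigr]\,r\,(\mathcal{E}_{a}-{r'}^{2}\mathcal{K}_{a}).
\]
Since $\mathcal{E}_{a}$ is continuous on $[0,1]$ and $\mathcal{K}_{a}(r)=O(\log(1/r'))$ as $r\to1^{-}$ (so ${r'}^{2}\mathcal{K}_{a}\to 0$), the factor $\mathcal{E}_{a}-{r'}^{2}\mathcal{K}_{a}$ extends continuously to $r=1$ with value $\mathcal{E}_{a}(1)=\sin(\pi a)/[2(1-a)]$; hence $\Phi$ is continuous on $[0,1]$ with $\Phi(1)=0$ and $\lim_{r\to1^{-}}\Phi'(r)=-2a(1-a)\sin(\pi a)$. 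Because $\Psi(1)=0$ and $\Psi'(r)=4r^{3}-6r^{5}$ is nonzero for $r$ near $1$, with $\Psi'(1)=-2$, L'Hôpital's rule gives $F(1^{-})=\Phi'(1)/\Psi'(1)=a(1-a)\sin(\pi a)$, the right endpoint.

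It then remains to prove $c_{N}>0$ for every $N\ge 0$, for then $F'(r)=\sum_{N\ge 1}2Nc_{N}r^{2N-1}>0$ on $(0,1)$, so $F$ is strictly increasing and maps $(0,1)$ onto $\bigl(F(0^{+}),F(1^{-})\bigr)$. I would telescope $\sum_{m=3}^{N+2}\phi_{m}$ (writing $\frac{m-3}{m-1}w_{m-2}=\frac{m-2}{m-1}w_{m-2}-\frac{1}{m-1}w_{m-2}$) and use the identity $\sum_{j\ge 0}w_{j}/(j+1)=\sin(\pi a)/[\pi a(1-a)]$ — obtained by letting $r\to1^{-}$ in the expansion of $\mathcal{E}_{a}-{r'}^{2}\mathcal{K}_{a}$ above and equating with $\mathcal{E}_{a}(1)$ — to reach the closed form
\[
c_{N}=\pi a(1-a)\left(\sum_{j\ge N+2}\frac{w_{j}}{j+1}-\frac{N}{N+2}\,w_{N+1}\right),\qquad N\ge 0.
\]
For $N=0$ this is a sum of positive terms. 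For $N\ge 1$ the decisive point is that $\{nw_{n}\}_{n\ge 1}$ is strictly increasing, since $\frac{(n+1)w_{n+1}}{nw_{n}}=\frac{(a+n)(n+1-a)}{n(n+1)}=1+\frac{a(1-a)}{n(n+1)}>1$; hence $jw_{j}\ge (N+2)w_{N+2}$ for $j\ge N+2$, and as $\sum_{j\ge N+2}\frac{1}{j(j+1)}=\frac{1}{N+2}$ this yields $\sum_{j\ge N+2}\frac{w_{j}}{j+1}\ge w_{N+2}$. Together with the elementary inequality $\frac{w_{N+2}}{w_{N+1}}=\frac{(N+1)(N+2)+a(1-a)}{(N+2)^{2}}>\frac{N}{N+2}$ this gives $c_{N}\ge\pi a(1-a)\,w_{N+1}\bigl(\frac{w_{N+2}}{w_{N+1}}-\frac{N}{N+2}\bigr)>0$.

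The step I expect to be the main obstacle is precisely this last positivity for large $N$: since $nw_{n}$ tends to $\sin(\pi a)/\pi$, the two competing quantities $\sum_{j\ge N+2}w_{j}/(j+1)$ and $\frac{N}{N+2}w_{N+1}$ are both comparable to $1/N$, so no crude estimate suffices, and it is the monotonicity of $\{nw_{n}\}$ — a sharp fact — together with the Pochhammer ratio inequality that closes the gap. A conceivable alternative for the monotonicity part is to write $F=\bigl[\Phi(r)/r^{4}-0\bigr]/\bigl[(1-r^{2})-0\bigr]$ and apply Lemma 3.1, thereby reducing to showing that $(\Phi/r^{4})'/(1-r^{2})'$ is increasing on $(0,1)$; but that ratio is itself a nontrivial linear combination of $\mathcal{K}_{a}$ and $\mathcal{E}_{a}$, so the power–series route above looks cleaner.
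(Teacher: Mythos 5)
Your proposal is correct, but it takes a genuinely different route from the paper. The paper writes $F=F_1/F_2$ with $F_1=\Phi(r)/r^4$ and $F_2=r'^2$ (both vanishing at $r=1$), and applies the l'H\^opital monotone rule (Lemma 3.1): after using the derivative formulas (1.3)--(1.4), the ratio $F_1'/F_2'$ reduces to $2(1-a)F_3(r)/r^4$, and the four competing hypergeometric series in $F_3$ collapse, via the Pochhammer shift $(a)_{n+1}=(a+n)(a)_n$, into the single manifestly positive series $\pi a(1-a)[2+a(1-a)]\sum_{n\ge0}\frac{(a)_{n+1}(1-a)_{n+1}}{(n+3)!\,n!}r^{2n}$; monotonicity and both endpoint values then follow at once. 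You instead expand $F$ itself as $\sum_N c_N r^{2N}$ and prove $c_N>0$ directly. I checked your computations: the coefficients $\phi_m$, the telescoped closed form $c_N=\pi a(1-a)\bigl(\sum_{j\ge N+2}\frac{w_j}{j+1}-\frac{N}{N+2}w_{N+1}\bigr)$ (using $\sum_{j\ge0}w_j/(j+1)=\sin(\pi a)/[\pi a(1-a)]$, i.e.\ the value $\mathcal{E}_a(1)$), the formula for $\Phi'$, and the endpoint limits are all right, and the positivity argument via the strict increase of $\{nw_n\}$ together with $\sum_{j\ge N+2}\frac{1}{j(j+1)}=\frac{1}{N+2}$ and $w_{N+2}/w_{N+1}>N/(N+2)$ is sound. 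What each approach buys: the paper's cancellation is slicker and avoids any sharp asymptotic bookkeeping, precisely because applying Lemma 3.1 lets one differentiate once before comparing series; your version is more self-contained (no monotone-rule lemma needed, and your closed form for $c_0$ even reproves the inequality of Lemma 3.2 as a by-product), at the cost of the delicate estimate you correctly identify as the crux, where both sides are of order $1/N$. Your closing remark underestimates the alternative you dismiss: the "nontrivial linear combination" $F_3$ in fact telescopes to a one-term positive series, which is exactly why the paper takes that road.
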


\noindent{\em Proof}. Let $F_{1}(r)=\sin(\pi
a)-2a(1-a)r'^{2}{\mathcal{K}_{a}}/r^2+2(1-a)(r'^{2}/r^2-1)({\mathcal{E}_{a}}-r'^{2}{\mathcal{K}_{a}})/r^2$,
$F_{2}(r)=r'^{2}$. Then $F(r)=F_{1}(r)/F_{2}(r)$,
$F_{1}(1)=F_{2}(1)=0$,
\begin{align*}
F_{1}'(r)=&-2a(1-a)\left[-\frac{2}{r^3}\mathcal{K}_{a}+\frac{r'^2}{r^2}\frac{2(1-a)}{r{r'}^2}(\mathcal{E}_{a}-{r'}^2\mathcal{K}_{a})\right]\\
&+2(1-a)\left(-\frac{2}{r^3}\right)\frac{{\mathcal{E}_{a}}-r'^{2}{\mathcal{K}_{a}}}{r^2}+2(1-a)\left(\frac{r'^2}{r^2}-1\right)
\frac{2ar^2{\mathcal{K}_{a}}-2({\mathcal{E}_{a}}-r'^{2}{\mathcal{K}_{a}})}{r^3}\\
=&4a(1-a)\frac{{\mathcal{K}_{a}}}{r^3}-4a(1-a)^2\frac{{\mathcal{E}_{a}}-{r'}^{2}{\mathcal{K}_{a}}}{r^3}-4(1-a)\frac{{\mathcal{E}_{a}}-{r'}^{2}{\mathcal{K}_{a}}}{r^5}\\
&+4(1-a)(r'^2-r^2)\frac{ar^2{\mathcal{K}_{a}}-({\mathcal{E}_{a}}-r'^{2}{\mathcal{K}_{a}})}{r^5},
\end{align*}
\begin{equation*}
F_{2}'(r)=-2r,
\end{equation*}
\begin{align}
\frac{F_{1}'(r)}{F_{2}'(r)}=&2(1-a)\bigg\{\frac{-ar^2{\mathcal{K}_{a}}+\left[a(1-a)r^2+1\right](\mathcal{E}_{a}-{r'}^2\mathcal{K}_{a})}{r^6}\notag\\
&-\frac{(r'^2-r^2)\left[ar^2{\mathcal{K}_{a}}-({\mathcal{E}_{a}}-r'^{2}{\mathcal{K}_{a}})\right]}{r^6}\bigg\}\notag\\
=&2(1-a)\frac{F_{3}(r)}{r^4},
\end{align}
where
\begin{equation*}
F_{3}(r)=\left[2-2r^{2}+a(1-a)r^{2}\right]\frac{{\mathcal{E}_{a}}-r'^{2}{\mathcal{K}_{a}}}{r^{2}}-2ar'^{2}{\mathcal{K}_{a}}.
\end{equation*}

Making use of series expansion, we get
\begin{align}
\frac{2}{\pi}F_{3}(r)=&\left\{2+[a(1-a)-2]r^2\right\}\sum\limits_{n=0}^{\infty}\frac{a(a)_{n}(1-a)_{n}}{(n+1)!n!}r^{2n}-2a(1-r^2)
\sum\limits_{n=0}^{\infty}\frac{(a)_{n}(1-a)_{n}}{(n!)^2}r^{2n}\notag\\
=&a\bigg\{2\sum\limits_{n=0}^{\infty}\frac{(a)_{n}(1-a)_{n}}{(n+1)!n!}r^{2n}+[a(1-a)-2]\sum\limits_{n=0}^{\infty}\frac{(a)_{n}(1-a)_{n}}{(n+1)!n!}r^{2n+2}\notag\\
&-2\sum\limits_{n=0}^{\infty}\frac{(a)_{n}(1-a)_{n}}{(n!)^2}r^{2n}+2\sum\limits_{n=0}^{\infty}\frac{(a)_{n}(1-a)_{n}}{(n!)^2}r^{2n+2}\bigg\}\notag\\
=&a\bigg\{2\sum\limits_{n=0}^{\infty}\frac{(a)_{n+1}(1-a)_{n+1}}{(n+2)!(n+1)!}r^{2n+2}+[a(1-a)-2]\sum\limits_{n=0}^{\infty}\frac{(a)_{n}(1-a)_{n}}{(n+1)!n!}r^{2n+2}\notag\\
&-2\sum\limits_{n=0}^{\infty}\frac{(a)_{n+1}(1-a)_{n+1}}{[(n+1)!]^2}r^{2n+2}+2\sum\limits_{n=0}^{\infty}\frac{(a)_{n}(1-a)_{n}}{(n!)^2}r^{2n+2}\bigg\}\notag\\
=&a\left[2+a(1-a)\right]\sum\limits_{n=0}^{\infty}\frac{(a)_{n+1}(1-a)_{n+1}}{(n+3)!n!}r^{2n+4}.
\end{align}
From (3.9) and (3.10) one has
\begin{equation}
\frac{F_{1}'(r)}{F_{2}'(r)}=\pi
a(1-a)\left[2+a(1-a)\right]\sum\limits_{n=0}^{\infty}\frac{(a)_{n+1}(1-a)_{n+1}}{(n+3)!n!}r^{2n}.
\end{equation}

Therefore, the monotonicity of $F(r)$ follows from  Lemma 3.1 and
(3.11). Moreover, by l'H\^{o}ptial's rule we get
\begin{equation*}
\lim\limits_{r\rightarrow 1^{-}}F(r)=a(1-a)\sin(\pi a),
\end{equation*}
\begin{align*}
\lim\limits_{r\rightarrow 0^{+}}F(r)=&\sin(\pi
a)+\lim\limits_{r\rightarrow
0^{+}}\frac{2(1-a)(-r^{2})({\mathcal{E}_{a}}-r'^{2}{\mathcal{K}_{a}})}{r^{4}}\notag\\
&+\lim\limits_{r\rightarrow \notag
0^{+}}\frac{2(1-a)r'^{2}({\mathcal{E}_{a}}-r'^{2}{\mathcal{K}_{a}})-2a(1-a)r^{2}r'^{2}{\mathcal{K}_{a}}}{r^{4}}\notag\\
=&\sin(\pi a)-2(1-a)\frac{\pi a}{2}+\lim\limits_{r\rightarrow
0^{+}}2(1-a)\frac{({\mathcal{E}_{a}}-r'^{2}{\mathcal{K}_{a}})-ar^{2}{\mathcal{K}_{a}}}{r^{4}}\notag\\
=&\sin(\pi a)-\pi a(1-a)+2(1-a)\lim\limits_{r\rightarrow
0^{+}}\frac{-2(1-a)ar^{2}({\mathcal{E}_{a}}-r'^{2}{\mathcal{K}_{a}})/(rr'^{2})}{4r^{3}}\notag\\
=&\sin(\pi a)-\pi a(1-a)-(\pi/2)a^{2}(1-a)^{2}.\quad \Box
\end{align*}

\begin{lem}
Let $a\in(0,1/2]$, $\lambda_{1}=[\sin(\pi
a)-a(1-a)\pi]/\{\sin(\pi a)[R(a)-1]\}$, $\lambda_{2}=1-[\pi
a(1-a)]/\sin(\pi a)-[\pi a^{2}(1-a)^{2}]/[2\sin(\pi a)]$,
$\beta=a(1-a)$, and the function $G_{\lambda}(\cdot)$ be defined on $(0, 1)$ by
\begin{equation*}
G_{\lambda}(r)=\frac{1}{2}-\log\left(\frac{e^{R(a)/2}}{r'}\right)+\frac{r^{2}\sin(\pi
a)-2(1-a)({\mathcal{E}_{a}}-{r'}^{2}{\mathcal{K}_{a}})} {2\lambda
r^{2}{r'}^{2}\sin(\pi a)}.
\end{equation*}
Then the following statements are true:

(1) $G_{\lambda}'(r)>0$ for all $r\in(0,1)$ if $0<\lambda\leq\lambda_{2}$;

(2) $G_{\lambda}'(r)<0$ for all $r\in(0,1)$ if $\lambda\geq\beta$;

(3) There exists $r_{0}\in(0,1)$ such that $G_{\lambda}'(r)<0$ for $r\in(0,r_{0})$
and $G_{\lambda}'(r)>0$ for $r\in(r_{0},1)$ if $\lambda_{2}<\lambda<\beta$.

Moreover,
\begin{equation}
\left\{
\begin{array}{ll}
G_{\lambda}(0^{+})>0,&\quad 0<\lambda<\lambda_{1},\\
G_{\lambda}(0^{+})=0,&\quad\lambda=\lambda_{1},\\
G_{\lambda}(0^{+})<0,&\quad\lambda>\lambda_{1}
\end{array}
\right.
\end{equation}
and
\begin{equation}
\left\{
\begin{array}{ll}
G_{\lambda}(1^{-})=+\infty,&\quad0<\lambda<\beta,\\
G_{\lambda}(1^{-})=1-\displaystyle\frac{1}{2a(1-a)}<0,&\quad\lambda=\beta,\\
G_{\lambda}(1^{-})=-\infty,&\quad\lambda>\beta.
\end{array}
\right.
\end{equation}
\end{lem}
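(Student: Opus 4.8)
The plan is to reduce the three sign assertions to the monotonicity of the function $F$ of Lemma 3.3, and to obtain the two boundary limits by expanding the rational part of $G_{\lambda}$ at $r=0$ and $r=1$. First I would differentiate. Writing $G_{\lambda}(r)=\frac{1}{2}-\frac{R(a)}{2}+\log r'+\frac{N(r)}{2\lambda r^{2}r'^{2}\sin(\pi a)}$ with $N(r)=r^{2}\sin(\pi a)-2(1-a)(\mathcal{E}_{a}-r'^{2}\mathcal{K}_{a})$, and using $\frac{d}{dr}\log r'=-r/r'^{2}$, $(r^{2}r'^{2})'=2r(r'^{2}-r^{2})$ together with $\frac{d}{dr}(\mathcal{E}_{a}-r'^{2}\mathcal{K}_{a})=2ar\mathcal{K}_{a}$ from (1.4), the numerator of $\bigl(N(r)/(r^{2}r'^{2})\bigr)'$ collapses (the $\sin(\pi a)$ terms combine through $r'^{2}-(r'^{2}-r^{2})=r^{2}$) to $2r\bigl[r^{4}\sin(\pi a)-2a(1-a)r^{2}r'^{2}\mathcal{K}_{a}+2(1-a)(r'^{2}-r^{2})(\mathcal{E}_{a}-r'^{2}\mathcal{K}_{a})\bigr]=2r^{5}r'^{2}F(r)$, where $F$ is precisely the function of Lemma 3.3. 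Hence
\begin{equation*}
G_{\lambda}'(r)=-\frac{r}{r'^{2}}+\frac{rF(r)}{\lambda r'^{2}\sin(\pi a)}=\frac{r}{\lambda r'^{2}\sin(\pi a)}\bigl[F(r)-\lambda\sin(\pi a)\bigr],
\end{equation*}
so on $(0,1)$ the sign of $G_{\lambda}'(r)$ equals that of $F(r)-\lambda\sin(\pi a)$.

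By Lemma 3.3, $F$ is strictly increasing on $(0,1)$ with $F(0^{+})=\sin(\pi a)-\pi a(1-a)-\frac{\pi}{2}a^{2}(1-a)^{2}=\lambda_{2}\sin(\pi a)$ (which is positive, by Lemma 3.2) and $F(1^{-})=a(1-a)\sin(\pi a)=\beta\sin(\pi a)$; in particular $\lambda_{2}<\beta$. If $0<\lambda\le\lambda_{2}$, then $\lambda\sin(\pi a)\le F(0^{+})<F(r)$ for every $r\in(0,1)$, which gives (1). If $\lambda\ge\beta$, then $\lambda\sin(\pi a)\ge F(1^{-})>F(r)$ for every $r$, which gives (2). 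If $\lambda_{2}<\lambda<\beta$, then $F(0^{+})<\lambda\sin(\pi a)<F(1^{-})$, so continuity and strict monotonicity of $F$ produce a unique $r_{0}\in(0,1)$ with $F(r_{0})=\lambda\sin(\pi a)$, and $F-\lambda\sin(\pi a)$ changes sign from negative to positive there, which gives (3).

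For $G_{\lambda}(0^{+})$ I would integrate $\frac{d}{dr}(\mathcal{E}_{a}-r'^{2}\mathcal{K}_{a})=\pi a r\,F(a,1-a;1;r^{2})$ term by term to obtain $\mathcal{E}_{a}-r'^{2}\mathcal{K}_{a}=\frac{\pi a r^{2}}{2}F(a,1-a;2;r^{2})$, so that
\begin{equation*}
\frac{N(r)}{2\lambda r^{2}r'^{2}\sin(\pi a)}=\frac{\sin(\pi a)-\pi a(1-a)F(a,1-a;2;r^{2})}{2\lambda r'^{2}\sin(\pi a)}\longrightarrow\frac{\sin(\pi a)-\pi a(1-a)}{2\lambda\sin(\pi a)}
\end{equation*}
as $r\to0^{+}$. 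With $\log(e^{R(a)/2}/r')\to R(a)/2$ this gives $G_{\lambda}(0^{+})=\frac{1-R(a)}{2}+\frac{\sin(\pi a)-\pi a(1-a)}{2\lambda\sin(\pi a)}$. Since $\sin(\pi a)-\pi a(1-a)>0$ by Lemma 3.2 and $R(a)>1$ (indeed $R(a)>1/a\ge2$ by Theorem 2.2), this quantity is strictly decreasing in $\lambda>0$, runs from $+\infty$ down to $\frac{1-R(a)}{2}<0$, and vanishes exactly at $\lambda=\lambda_{1}$; this is (3.12).

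The main obstacle is $G_{\lambda}(1^{-})$, which requires the behaviour of $F(a,1-a;2;z)$ as $z\to1^{-}$. Here $c-a-b=1$ is a positive integer, so the logarithmic connection formula in the case $m=1$ (cf.\ [1]) yields $F(a,1-a;2;z)=\frac{\sin(\pi a)}{\pi a(1-a)}+\frac{\sin(\pi a)}{\pi}(1-z)\bigl[\log(1-z)-1-R(a)+\frac{1}{a(1-a)}\bigr]+O\bigl((1-z)^{2}\log(1-z)\bigr)$, where the constant $-1-R(a)+\frac{1}{a(1-a)}$ arises from $-\Psi(1)-\Psi(2)+\Psi(a+1)+\Psi(2-a)$ via $\Psi(a)+\Psi(1-a)=-2\gamma-R(a)$. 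Substituting $z=r^{2}$ (so $1-z=r'^{2}$, $\log(1-z)=2\log r'$) into $\frac{N(r)}{2\lambda r^{2}r'^{2}\sin(\pi a)}=\frac{\sin(\pi a)-\pi a(1-a)F(a,1-a;2;r^{2})}{2\lambda r'^{2}\sin(\pi a)}$ and simplifying, I obtain
\begin{equation*}
G_{\lambda}(r)=\Bigl(1-\frac{a(1-a)}{\lambda}\Bigr)\log r'+\Bigl[\frac{1-R(a)}{2}+\frac{a(1-a)(1+R(a))-1}{2\lambda}\Bigr]+o(1)\qquad(r\to1^{-}).
\end{equation*}
If $\lambda>\beta=a(1-a)$ the coefficient of $\log r'$ is positive, so $G_{\lambda}(1^{-})=-\infty$; if $0<\lambda<\beta$ it is negative, so $G_{\lambda}(1^{-})=+\infty$; and if $\lambda=\beta$ the $\log r'$ term disappears and the bracket collapses to $1-\frac{1}{2a(1-a)}$, which is negative because $a(1-a)\le\frac14$. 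This establishes (3.13). The only genuine subtlety is pinning down the constant in the $z\to1$ expansion (equivalently, the classical asymptotic $\mathcal{K}_{a}(r)=\sin(\pi a)\log(e^{R(a)/2}/r')+O(r'^{2}\log r')$); everything else is bookkeeping.
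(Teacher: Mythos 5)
Your proposal is correct, and for the sign analysis of $G_{\lambda}'$ and for the limit at $r=0$ it coincides with the paper's argument: the paper likewise reduces $G_{\lambda}'(r)$ to $\frac{r}{\lambda r'^{2}}\bigl[-\lambda+F(r)/\sin(\pi a)\bigr]$ and invokes Lemma 3.3 (your identification $F(0^{+})=\lambda_{2}\sin(\pi a)$, $F(1^{-})=\beta\sin(\pi a)$ is exactly what makes parts (1)--(3) fall out), and it computes $G_{\lambda}(0^{+})$ from $(\mathcal{E}_{a}-r'^{2}\mathcal{K}_{a})/r^{2}\to\pi a/2$, which is the $r\to0$ value of your identity $\mathcal{E}_{a}-r'^{2}\mathcal{K}_{a}=\tfrac{\pi a r^{2}}{2}F(a,1-a;2;r^{2})$.

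Where you genuinely diverge is the limit (3.13). The paper adds and subtracts $\mathcal{K}_{a}(r)/\sin(\pi a)$, uses only the first-order asymptotic $\mathcal{K}_{a}(r)/\sin(\pi a)-\log(e^{R(a)/2}/r')\to0$ cited from [17, (2.26)], and then must split into two cases: for $\lambda\neq\beta$ the remaining bracket tends to $a(1-a)/\lambda-1\neq0$ times $\mathcal{K}_{a}\to\infty$, giving $\pm\infty$, while the borderline case $\lambda=\beta$ requires a separate l'H\^opital computation with the derivative formulas (1.3)--(1.4). You instead take the full logarithmic connection formula for $F(a,1-a;2;z)$ at $z=1$ (the $c-a-b=1$ case), including the coefficient of $(1-z)\log(1-z)$ and the constant $-\Psi(1)-\Psi(2)+\Psi(a+1)+\Psi(2-a)=-1-R(a)+\tfrac{1}{a(1-a)}$, which yields a uniform two-term expansion $G_{\lambda}(r)=(1-a(1-a)/\lambda)\log r'+[\cdots]+o(1)$ from which all three alternatives of (3.13) are read off simultaneously; I checked that your bracket does collapse to $1-\tfrac{1}{2a(1-a)}$ at $\lambda=\beta$. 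The trade-off: your route imports a sharper piece of classical asymptotics (the second-order term near $z=1$) but avoids the case split and the l'H\^opital computation; the paper's route needs only the weaker cited limit but pays for it with an extra calculation in the critical case. Both are complete and correct.
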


\begin{proof} It is apparent from Lemma 3.2 that $\lambda_{2}>0$ for all $a\in(0,1/2]$. Therefore, parts (1)-(3) follows from Lemma 3.3 and the fact that
\begin{align*}
&G_{\lambda}'(r)=-\frac{r}{r'^{2}}+\frac{1}{2\lambda\sin(\pi a)}\notag\\
&\times\frac {[2r\sin(\pi a)-4a(1-a)r{\mathcal{K}_{a}}]r^{2}r'^{2}-
[r^{2}\sin(\pi
a)-2(1-a)({\mathcal{E}_{a}}-r'^{2}{\mathcal{K}_{a}})](2rr'^{2}-2r^{3})}
{r^{4}r'^{4}}\notag\\
&=\frac{r}{\lambda r'^{2}}\left[-\lambda+\frac{1}{\sin(\pi
a)}\frac{r^{4}\sin(\pi
a)-2a(1-a)r^{2}r'^{2}{\mathcal{K}_{a}}+2(1-a)(r'^{2}-r^{2})({\mathcal{E}_{a}}-r'^{2}{\mathcal{K}_{a}})}{r'^{2}r^{4}}\right]\notag\\
&=\frac{r}{\lambda r'^{2}}\left[-\lambda+\frac{1}{\sin(\pi
a)}F(r)\right],
\end{align*}
where $F$ is defined as in Lemma 3.3.

Next, we calculate the limit values of $G_{\lambda}(r)$ at $0$ and
$1$. Simple computations lead to
\begin{align}
G_{\lambda}(0^{+})=&\frac{1}{2}-\frac{R(a)}{2}+\frac{\sin(\pi a)-2(1-a)[(\pi a)/2]}{2\lambda \sin(\pi a)}\notag\\
=&\frac{\sin(\pi a)[R(a)-1]}{2\lambda \sin(\pi a)}\left(-\lambda+\frac{\sin(\pi a)-\pi a(1-a)}{ \sin(\pi a)[R(a)-1]}\right)\notag\\
=&\frac{\sin(\pi a)[R(a)-1]}{2\lambda \sin(\pi
a)}(-\lambda+\lambda_{1}),\notag
\end{align}
which implies (3.12).

It follows from
\begin{equation*}
\lim\limits_{r\rightarrow 1^-}\frac{\mathcal{K}_{a}(r)}{\sin(\pi
a)}-\log\left(e^{R(a)/2}/r'\right)=0
\end{equation*}
in [17, p. 635, (2.26)] that
\begin{align}
G_{\lambda}(1^-)=&\frac{1}{2}+\lim\limits_{r\rightarrow
1^{-}}\left[\frac{{\mathcal{K}_{a}}}{\sin(\pi
a)}-\log(e^{R(a)/2}/r')\right]+\lim\limits_{r\rightarrow
1^{-}}\left[\frac{r^2\sin(\pi a)-\sin(\pi a)}{2\sin(\pi a)\lambda
r^{2}r'^{2}}\right]\notag\\
&+\lim\limits_{r\rightarrow 1^{-}}\left[\frac{\sin(\pi
a)-2(1-a)({\mathcal{E}_{a}}-r'^{2}{\mathcal{K}_{a}})}{2\sin(\pi
a)\lambda r^{2}r'^{2}}-\frac{\mathcal{K}_{a}(r)}{\sin(\pi a)}\right]\notag\\
=&\frac{1}{2}-\frac{1}{2\lambda}+\lim\limits_{r\rightarrow
1^{-}}\left[\frac{\sin(\pi
a)-2(1-a)({\mathcal{E}_{a}}-r'^{2}{\mathcal{K}_{a}})}{2\sin(\pi
a)\lambda r^{2}r'^{2}}-\frac{\mathcal{K}_{a}(r)}{\sin(\pi
a)}\right].
\end{align}

We divide the proof of (3.13) into two cases.

{\bf Case 1} $\lambda\neq \beta$. Then from (3.14) we have
\begin{align*}
G_{\lambda}(1^{-})=&\frac{1}{2}-\frac{1}{2\lambda}+\lim\limits_{r\rightarrow
1^{-}}\left[-1+\frac{\sin(\pi
a)-2(1-a)({\mathcal{E}_{a}}-r'^{2}{\mathcal{K}_{a}})}{r'^{2}\mathcal{K}_{a}}\frac{1}{2\lambda
r^{2}}\right]\frac{\mathcal{K}_{a}(r)}{\sin(\pi
a)}\\
=&\frac{1}{2}-\frac{1}{2\lambda}+\lim\limits_{r\rightarrow
1^{-}}\left[-1+\frac{a(1-a)}{\lambda}\right]\frac{\mathcal{K}_{a}(r)}{\sin(\pi
a)}\\
=&\left\{
\begin{array}{ll}
+\infty,&\quad0<\lambda<\beta,\\
-\infty,&\quad\lambda>\beta.
\end{array}
\right.
\end{align*}

{\bf Case 2} $\lambda=\beta$. Then equation (3.14) leads to
\begin{align*}
G_{\beta}(1^{-})=&\frac{1}{2}-\frac{1}{2a(1-a)}
+\lim\limits_{r\rightarrow 1^{-}}\frac{\sin(\pi
a)-2(1-a)({\mathcal{E}_{a}}-r'^{2}{\mathcal{K}_{a}})-2a(1-a)r^{2}r'^{2}{\mathcal{K}_{a}}}{2a(1-a)r^{2}r'^{2}\sin(\pi
a)}\\
=&\frac{1}{2}-\frac{1}{2a(1-a)}+\frac{1}{a\sin(\pi
a)}\lim\limits_{r\rightarrow 1^{-}}\frac{\displaystyle\frac{\sin(\pi
a)}{2(1-a)}-({\mathcal{E}_{a}}-r'^{2}{\mathcal{K}_{a}})-ar^{2}r'^{2}{\mathcal{K}_{a}}}{r'^{2}}\\
=&\frac{1}{2}-\frac{1}{2a(1-a)}\notag\\
&+\frac{1}{a\sin(\pi a)}\lim\limits_{r\rightarrow
1^{-}}\frac{-2ar{\mathcal{K}_{a}}-2arr'^{2}{\mathcal{K}_{a}}+2ar^{3}{\mathcal{K}_{a}}-2a(1-a)r
({\mathcal{E}_{a}}-r'^{2}{\mathcal{K}_{a}})}{-2r}\\
=&\frac{1}{2}-\frac{1}{2a(1-a)}+\frac{1}{\sin(\pi
a)}\lim\limits_{r\rightarrow
1^{-}}\left[2r'^{2}{\mathcal{K}_{a}}+(1-a)({\mathcal{E}_{a}}-r'^{2}{\mathcal{K}_{a}})\right]\\
=&1-\frac{1}{2a(1-a)}.
\end{align*}
\end{proof}

\begin{proof}[\sc \textbf{Proof of Theorem 1.1}]
Let
\begin{equation}
H_{\lambda}(r)=\sin(\pi a)(1+\lambda
r'^{2})\log(e^{R(a)/2}/r')-{\mathcal{K}_{a}(r)},\quad\lambda \in
\mathbb{R}^{+}.
\end{equation}
Then simple computations lead to
\begin{equation}
H_{\lambda}(0^{+})=[\sin(\pi a)(1+\lambda)-\pi]/2,
\end{equation}
\begin{equation}
H_{\lambda}(1^{-})=0,
\end{equation}
\begin{align}
H_{\lambda}'(r)=&\sin(\pi a)(-2\lambda
r)\log(e^{R(a)/2}/r')+\sin(\pi a)(1+\lambda
r'^{2})\left(\frac{r}{r'^{2}}\right)\notag\\
&-2(1-a)
\frac{({\mathcal{E}_{a}}-r'^{2}{\mathcal{K}_{a}})}{rr'^{2}} \notag\\
=&2\lambda r\sin(\pi a)G_{\lambda}(r),
\end{align}
where $G_{\lambda}$ is defined as in Lemma 3.4.

We divide the proof of inequality (1.7) into two cases.

{\bf Case I} $\lambda=\alpha_{0}=\pi/[R(a)\sin(\pi a)]-1$. Then equation (3.16) reduces to
\begin{equation}
H_{\alpha_{0}}(0^{+})=0.
\end{equation}

From Corollaries 2.4 and 2.5 we know that
$\beta>\alpha_{0}>\lambda_{1}$, then (3.12) and (3.13) lead to the
conclusion that $G_{\alpha_{0}}(0^+)<0$ and
$G_{\alpha_{0}}(1^-)=+\infty$. Moreover, wether
 $\alpha_{0}\in(0,\lambda_{2}]$ or
$\alpha_{0}\in(\lambda_{2},\beta)$, it follows from part (1) or (3)
in Lemma 3.4 that there exists $r_{0}^*\in(0,1)$ such that
$G_{\alpha_{0}}(r)<0$ for $r\in(0,r_{0}^*)$ and
$G_{\alpha_{0}}(r)>0$ for $r\in(r_{0}^*,1)$. Hence, from (3.18) we
clearly see that $H_{\alpha_{0}}(r)$ is strictly decreasing on
$(0,r_{0}^*)$ and strictly increasing on $(r_{0}^*,1)$.

Equations (3.17) and (3.19) together with the piecewise monotonicity
of $H_{\alpha_{0}}(r)$ lead to the conclusion that $H_{\alpha_{0}}(r)<0$ for all $r\in(0,1)$. Therefore,
the first inequality in (1.7) for $\alpha=\alpha_{0}$ follows easily from (3.15).

{\bf Case II} $\lambda=\beta_{0}=a(1-a)$. Then from (3.12), (3.13), Lemma
3.4(2) and the fact that $\beta_{0}>\lambda_{1}$ for all $a\in(0,1/2]$
we know that $G_{\beta_{0}}(r)$ is strictly decreasing on $(0,1)$,
$G_{\beta_{0}}(0^+)<0$ and $G_{\beta_{0}}(1^-)<0$. Thus $G_{\beta_{0}}(r)<0$ for
$r\in(0,1)$. It follows from (3.17) and (3.18) that $H_{\beta_{0}}(r)$
is strictly decreasing on $(0,1)$ and $H_{\beta_{0}}(r)>H_{\beta_{0}}(1^-)=0$ for $r\in(0,1)$. Therefore, the second
inequality in (1.7) for $\beta=\beta_{0}$ follows from (3.15).

\medskip
Finally, we prove that $\alpha=\alpha_{0}$ and $\beta=\beta_{0}$ are the best possible
parameters such that inequality (1.7) holds for all $a\in (0, 1/2]$ and $r\in(0,1)$. In
fact, if $\lambda>\alpha_{0}$, then from (3.16) we know that $H_{\lambda}(0^+)>0$.
Hence there exists $r_{1}\in(0,1)$ such that $H_{\lambda}(r)>0$ for
$r\in(0,r_{1})$. That is, $1+\lambda
r'^2>\mathcal{K}_{a}(r)/[\sin(\pi a)\log(e^{R(a)/2}/r')]$ for
$r\in(0,r_{1})$.

On the other hand, if $0<\lambda<\beta_{0}$, then (3.13) and (3.18)
imply that there exists $r_{1}^*\in (0,1)$ such that
$H_{\lambda}'(r)>0$ for $r\in(r_{1}^*,1)$. Thus $H_{\lambda}(r)$ is
strictly increasing on $(r_{1}^*,1)$ and
$H_{\lambda}(r)<H_{\lambda}(1^-)=0$. That is, $1+\lambda
r'^2<\mathcal{K}_{a}(r)/[\sin(\pi a)\log(e^{R(a)/2}/r')]$ for
$r\in(r_{1}^*,1)$.
\end{proof}

\bigskip

\end{document}